\theoremstyle{definition}
\newtheorem{thm}{Theorem}[section]
\newtheorem{lem}[thm]{Lemma}
\newtheorem{th-def}[thm]{Theorem-Definition}
\newtheorem{cor}[thm]{Corollary}
\newtheorem{defn-lem}[thm]{Definition-Lemma}
\newtheorem{prop}[thm]{Proposition}
\newtheorem{rem}[thm]{Remark}
\numberwithin{equation}{section}
\def \Q{{\mathbb Q}}
\def \C{{\mathbb C}}
\def \Z{{\mathbb Z}}
\def \R{{\mathbb R}}
\def\map#1.#2.{#1 \longrightarrow #2}
\def\rmap#1.#2.{#1 \dasharrow #2}
\DeclareMathOperator{\rank}{rank}
\DeclareMathOperator{\Spec}{Spec}
\DeclareMathOperator{\Hom}{Hom}
\DeclareMathOperator{\Emb}{Emb}
\DeclareMathOperator{\Arth}{Arth}
\def\fb#1.{\underset #1 \to \times}
\def\pr#1.{\Bbb P^{#1}}
\def\ring#1.{\mathcal O_{#1}}
\def\mlist#1.#2.{{#1}_1,{#1}_2,\dots,{#1}_{#2}}
\def\Hom{\operatorname{Hom}}
\def\uloopr#1{\ar@'{@+{[0,0]+(-4,5)} @+{[0,0]+(0,10)}
@+{[0,0]+(4,5)}}
  ^{#1}}
\def\dloopr#1{\ar@'{@+{[0,0]+(-4,-5)} @+{[0,0]+(0,-10)}
@+{[0,0]+(4,-5)}}
  _{#1}}
\def\rloopd#1{\ar@'{@+{[0,0]+(5,4)} @+{[0,0]+(10,0)}
@+{[0,0]+(5,-4)}}
  ^{#1}}
\def\lloopd#1{\ar@'{@+{[0,0]+(-5,4)} @+{[0,0]+(-10,0)}
@+{[0,0]+(-5,-4)}}
  _{#1}}
\long\def\ignore#1{}
\long\def\ignore#1{#1}
\title{An Enriques involution of a supersingular K3 surface over odd characteristic}
\author{Junmyeong Jang}
\date{}
\begin{document}

\maketitle

\medskip
     \section{Introduction}
     Let $k$ be an algebraically closed field. A projective minimal smooth surface $Y$ over $k$ is an Enriques surface if $K_{Y}^{2}=0$ and
     $\dim H^{2}_{\acute{e}t}(Y, \Q _{l})=10$. We assume the characteristic of the base field $k$ is not 2 and $Y$ is an Enriques surface over $k$.
     It is known that $H^{1}(Y, \mathcal{O}_{Y})= H^{2}(Y,\mathcal{O}_{Y})=0$ and $K_{Y}$ is non-trivial divisor. Since $K_{Y}$ is
     a line bundle of order 2, $Y$ has a double \'{e}tale cover $X \to Y$
     such that $\mathcal{O}_{X} = \mathcal{O}_{Y} \oplus K_{Y}$.
     Here $X$ is a K3 surface. Since the (local or global) moduli space of algebraic K3 surfaces is of
     19 dimension and the moduli space of Enriques surfaces is of 10 dimension, not every K3 surface is the double cover of an Enriques surface.
     Over the field of complex numbers, $\C$ the criterion for a K3 surface to be the double \'{e}tale cover of an Enriques surface
     comes from the Torelli theorem for K3 surfaces.
      We put $\Gamma$ is the unique even unimodular $\Z$-lattice of rank 10 of
     signature (1,9). $\Gamma$ has a decomposition $ \Gamma = U \oplus E_{8}$, where $U$ is the hyperbolic lattice of rank 2 and $E_{8}$ is the negative definite lattice induced by the root system $E_{8}$.
      The torsion free Neron-Severi lattice of an Enriques surface is isomorphic to $\Gamma$. The criterion is following.

       \vspace{0.2cm}
     \noindent
     \textbf{Theorem} (\cite{Ke})
    A complex K3 surface $X$ is the double cover of an Enriques surface or equivalently, $X$ has a fixed point free involution if and only if
      there exists a primitive embedding $\Gamma (2) \hookrightarrow NS(X)$ such that the orthogonal complement of $\Gamma (2)$ in $NS(X)$ has no vector of self intersection $-2$.

     \vspace{0.2cm}

      Note that here the orthogonal complement of $\Gamma (2)$ in $NS(X)$ is even negative definite.
      But over a field of positive characteristic, of course, the Torelli theorem is meaningless.
       However, if the characteristic of $k$ is $p>2$ and $X$ is a supersingular $K3$ surface, we have the crystalline Torelli theorem.
       (\cite{Og2})
     In this paper, using the crystalline Torelli theorem, we prove that for a supersingular K3 surface
     over an algebraically closed filed of characteristic $p>2$, the same criterion to be the double cover of an Enriques surface as
     the complex case holds.

     \vspace{0.2cm}
     \noindent
     \textbf{Theorem 4.1.}
    Assume $X$ is a superingular K3 surface defined over an algebraically closed field of characteristic $p>2$. $X$ is an Enriques K3 surface if and only if
     there exists a primitive embedding of $\Gamma (2) = U (2) \oplus E_{8}(2)$ into $NS(X)$ such that the orthogonal complement
     of $\Gamma(2)$ in $NS(X)$ does not contain a vector of self intersection $-2$.

     \vspace{0.2cm}

     This theorem depend only on the lattice structure of the Neron-Severi group. The Neron-Severi lattice of a supersingular K3 surface  is determined only by
     the characteristic of the base field and the Artin invariant. (see section 2) Therefore the criterion to have an Enriques involution
     is applied to the family of supersingular K3 surfaces of the same Artin invariant.

     Over an odd characteristic, a supersingular K3 surface of Artin invariant 1 is the Kummer sufrace associated to the product of two supersingular
     elliptic curves. Hence it has an Enriques involution. Using the above theorem, we prove over almost all characteristic, a supersingular K3 surface
     has an Enriques involution if and only if the Artin invariant is less than 6.

     \vspace{0.2cm}
     \noindent
     \textbf{Corollary 4.7.}
    Let $k$ be an algebraically closed field of characteristic $p=19$ or $p>23$ and $X$ be a supersingular K3 surface over $k$. $X$ is an Enriques K3 surface
     if and only if the Artin invariant of $X$ is less than 6.   \\[0.6cm]
    {\bf Acknowledgment}\\
    The author appreciates to J.Keum and M.Sch\"{u}tt for helpful comments and correcting mistakes.
    This research was supported by Basic Science Research Program through the National Research Foundation of Korea(NRF) funded by the Ministry of Education, Science and Technology(2011-0011428).

     \section{F-Crystals of K3 surfaces}
     In this section, we review some properties of crystalline cohomologies of K3 surfaces and the crystalline
     Torelli theorem of supersingular K3 surfaces.

     Let $k$ be an algebraically closed field of characteristic $p>2$,
     $W= W(k)$ be the ring of Witt vectors of $k$ and $K$ be the fraction field of $W$.
     Assume $X$ is a K3 surface defined over $k$. $X$ does not have a non-vanishing 1-form (\cite{RS},\cite{N1}), so
     the Hodge diagram of $X$ is
     $$
     \begin{array}{ccc}
     1 &0&1\\
     0& 20 & 0\\
     1 & 0 & 1.
     \end{array}
     $$
     Hence the Hodge-Derham spectral sequence of $X/k$ degenerates at $E_{1}$ and
     $$H^{1}_{dr}(X/k) = H^{3}_{dr}(X/k)=0.$$
     Considering the crystalline universal coefficient theorem (\cite{BO}, 7.26)
     $$ 0 \to H^{i} _{cris}(X/W) \otimes _{W} k \to H^{i} _{dr}(X/k) \to Tor ^{1}_{W} ( H^{i+1} _{cris}(X/W), k) \to 0,$$
     $H^{1}_{cris}(X/W) = H^{3}_{cris}(X/W)=0$ and $H^{2}_{cris} (X/W)$ is a free $W$-module of rank 22. $H^{i}_{cris}(X/W)$ is equipped
     with a canonical Frobenius linear endomorphism induced by the absolute Frobenius morphism of $X$. We denote this morphism by
     $$\mathbf{F} : H^{i}_{cris}(X/W) \to H^{i}_{cris}(X/W).$$
     The rational
     crystalline cohomology $H^{i}_{cris} (X/K) = H^{i}_{cris}(X/W) \otimes _{W} K$ with the induced Frobenius linear morphism $\mathbf{F}:
     H^{i}_{cris}(X/K) \to H^{i}_{cris}(X/K)$ is an F-isocrystal over $k$. An F-isocrystal is determined by its Newton slopes up to isomorphism as follow.
     Let $\sigma : K \to K$ be the Frobenius morphism on $K$. Put $K[T]$ is a Frobenius commutative polynomial ring over $K$ such that
     $Ta = \sigma (a) T$ for any $ a\in K$. Then we have a non-canonical isomorphism
     $$ H^{i} _{cris} (X/K) \simeq \bigoplus _{j} K[T] / (T^{r_{j}} - p^{s_{j}})$$
     of F-isocrystal. Here $r_{j}$ is a positive integer and $s_{j}$ is a non-negative integer. In the given isomorphism,
     $\mathbf{F}$ corresponds to the multiplication by $T$. The Newton slopes of $H^{i}_{cris}(X/K)$ are the
     rational number $s_{j}/r_{j}$ with multiplicity $r_{j}$.
     A refined information of Newton slopes of the crystalline cohomology can be obtained using
     the Derham-Witt complex
     $$0 \to W\mathcal{O}_{X} \to W\Omega ^{1}_{X} \to W\Omega ^{2}_{X} \to 0.$$
     We recall some properties of the Derham-Witt complex from \cite{I1}.
     The crystalline cohomology is realized as the hyper-cohomology
     of the Derham-Witt complex. The naive filtration of the Derham-Witt complex give raise to the
     slope spectral sequence
     $$ H^{i}(X,W\Omega ^{j} _{X}) \Longrightarrow H^{i+j} _{cris}(X/W).$$
     Here the sheaf cohomology $H^{i}(X, W\Omega _{X} ^{j})$ is not a finitely generated $W$-module in general,
     but the rank of its free part is finite and the spectral sequence
     $$H^{i}(X,W\Omega _{X} ^{j}) \otimes K \Longrightarrow H^{i+j}_{cris}(X/K)$$
     degenerates at $E_{1}$ level.
     The sheaf $W \Omega ^{j} _{X}$ has two operators $F$ and $V$ which
     are Frobenius $W$-linear and inverse-Frobenius $W$-linear respectively satisfying $FV=VF=p$. Hence $H^{i}(X, W\Omega ^{j}_{X}) \otimes _{W} K$
     with the induced operator $F$ is an F-isocrystal. Here $FV=p$ and $V$ is topologically nilpotent, so the Newton slopes of
     $(H^{i}(X,W\Omega ^{j}_{X}), F)$ are contained in $[0,1)$.
     \begin{prop}
          [\cite{I1}, p.616] The Newton slopes of $(H^{i}(X,W\Omega ^{j}_{X}), p^{j}F)$ are the $[j,j+1)$ part of
     the Newton slopes of $H^{i+j}_{cris}(X/K)$.
     \end{prop}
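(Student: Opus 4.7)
The plan is to derive the statement from the degeneration of the slope spectral sequence at $E_{1}$ after inverting $p$, together with a careful bookkeeping of how the crystalline Frobenius $\mathbf{F}$ acts on the associated graded pieces. First I would record the isomorphism of $K$-vector spaces
\[
H^{n}_{cris}(X/K) \cong \bigoplus_{i+j=n} H^{i}(X, W\Omega^{j}_{X}) \otimes_{W} K
\]
furnished by that $E_{1}$-degeneration. The task is then to identify how $\mathbf{F}$ acts on each summand and to match each summand with the corresponding slope range.

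The key compatibility statement is that the operator $F$ on $W\Omega^{j}_{X}$ is a \emph{divided} Frobenius: on degree $j$ the Frobenius inherited from the crystalline realization is divisible by $p^{j}$, and $F$ is defined as that quotient. Consequently, after passing to hypercohomology and splitting via the slope spectral sequence, $\mathbf{F}$ acts on $H^{i}(X, W\Omega^{j}_{X}) \otimes K$ as $p^{j}F$. I would establish this by tracing the Frobenius through the construction of $W\Omega^{\bullet}_{X}$ and its comparison with the crystalline complex.

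With this compatibility in hand, the rest is formal. The relation $FV=VF=p$ combined with the topological nilpotence of $V$ forces every Newton slope of $(H^{i}(X, W\Omega^{j}_{X}), F)$ into $[0,1)$: any isoclinic component of slope $s \ge 1$ would make $V = pF^{-1}$ act with non-negative slope, contradicting its topological nilpotence. Multiplying by $p^{j}$ translates these slopes into $[j, j+1)$. Since these intervals are pairwise disjoint for varying $j$, and the Newton slopes of a direct sum of F-isocrystals are the union (with multiplicity) of those of the summands, we conclude that the slopes of $(H^{i}(X, W\Omega^{j}_{X}), p^{j}F)$ are exactly the $[j,j+1)$ part of the slopes of $H^{i+j}_{cris}(X/K)$.

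The main obstacle I expect is the compatibility in the second paragraph: the degeneration of the slope spectral sequence by itself produces only a $K$-vector space decomposition, and one must upgrade it to a splitting that intertwines $\mathbf{F}$ with the twisted Frobenius $p^{j}F$ on each summand. This is exactly the content of Illusie's slope decomposition theorem, whose proof requires a close analysis of the canonical slope filtration on the de Rham–Witt complex rather than a soft spectral-sequence argument.
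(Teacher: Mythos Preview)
The paper does not prove this proposition at all: it is quoted as a result from Illusie \cite{I1}, p.~616, and the text moves on immediately to applications. So there is no ``paper's own proof'' to compare against.

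That said, your sketch is a faithful outline of the argument one finds in Illusie. The three ingredients you isolate --- $E_{1}$-degeneration of the slope spectral sequence after inverting $p$, the identification of $\mathbf{F}$ with $p^{j}F$ on the $j$-th graded piece, and the constraint that $(H^{i}(X,W\Omega^{j}_{X}),F)$ has slopes in $[0,1)$ from $FV=p$ with $V$ topologically nilpotent --- are exactly the inputs, and you are right that the second of these is where the real work lies. Your last paragraph correctly flags that the $K$-linear splitting from degeneration must be upgraded to one compatible with the Frobenius actions, and that this is the content of Illusie's slope decomposition rather than a formal consequence of the spectral sequence alone.
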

     By the Poincare duality, the Newton slopes of $H^{2}_{cris}(X/K)$ are symmetric. In other words,
     for $H^{2}_{cris}(X/K)$,
     the multiplicity of the slope $\alpha$ $(0\leq \alpha \leq 2))$ is equal to the multiplicity of the
     slope $2-\alpha$. Since $\dim _{K} H^{2}_{cris}(X/K)=22$ is fixed, the Newton slopes of $H^{2}(X/K)$
     are determined by the Newton slopes of $H^{2}(X,W\mathcal{O}_{X})\otimes _{W} K$.
     On the other hand, $H^{2}(X, W\mathcal{O}_{X})$ is the Dieudonn\'{e} module of the
     formal Brauer group of $X$, $\widehat{Br}_{X}$. (\cite{AM}) Since $H^{1}(X,\mathcal{O} _{X})=0$ and
     $\dim _{k} H^{2}(X, \mathcal{O}_{X})=1$, $\widehat{Br}_{X}$ is a 1-dimensional smooth formal group and
     the Dieudonn\'{e} module of $\widehat{Br} _{X}$ is
     \begin{center}

     $H^{2}(X,W\mathcal{O}_{X})$ = $ W[F,V]/(FV=p, F=V^{h-1})$, $h\geq 1$ \hspace{0.1cm} or
     \end{center}
     \begin{center}
          $H^{2}(X,W\mathcal{O}_{X})$      = $ k[[V]]$, $F=0$.
     \end{center}
     In the first case, we say the height of
     $X$ is $h$ and in the second case, we say the height of $X$ is $\infty$. If the height is $h < \infty$,
     $H^{2}(X,W\mathcal{O}_{X})$ is a free $W$-module of rank $h$ and the all the Newton slopes are $1-1/h$.
     In this case $1- 1/h$ is the only slope of $H^{2}_{cris}(X/K)$ less than 1 and the multiplicity of $1-1/h$ is $h$.
     Also the only Newton slope greater than 1 is $1+1/h$ with multiplicity $h$. If $h=1$, we say $X$ is ordinary.
     If $X$ is ordinary, the Newton polygon and the Hodge polygon of $X$ coincide.
     If the height is $\infty$,
     $H^{2}(X, W\mathcal{O}_{X}) \otimes _{W} K =0$ and all the slopes of $H^{2}_{cris}(X/K)$ are 1.

     Now consider the Kummer
     sequence on the flat topology of $X$,
     $$ 0 \to \mu _{p^{n},X} \to \mathbb{G} _{m,X} \overset {p^{n}}{\to} \mathbb{G} _{m,X} \to 0.$$
     From the associated long exact sequence, we obtain the short exact sequence
     $$ 0 \to NS(X) \otimes \Z/p^{n} \Z \to H^{2}(X, \mu _{p^{n},X}) \to Br(X)[p^{n}] \to 0.$$\
     Since all the terms of the above sequence are of finite length, they satisfy the Mittag-Leffler condition and
     after the inverse limit of the sequence, we obtain an exact sequence
     $$ 0 \to NS(X) \otimes \Z _{p} \to H^{2}(X, \Z _{p}(1)) \to T_{p}(Br(X)) \to 0.$$
     Since $NS(X) \otimes \Z _{p} \hookrightarrow H^{2}(X, \Z _{p}(1))$, the Picard number of
     $X$ is equal to or less than the rank of the free part of $H^{2}(X, \Z _{p}(1))$. By the way,
     there exists an exact sequence
     $$0 \to H^{2}(X, \Z_{p}(1)) \to H^{1}(X, W\Omega ^{1}_{X}) \overset{F-1}{\to} H^{1}(X, W\Omega ^{1} _{X}) \to 0. \ \cite{N1}$$
     Since $H^{1}(X, W\Omega ^{1}_{X}) \otimes _{W} K$ is the slope $[1,2)$-part of F-isocrystal $H^{2}_{cris}(X/K)$ and on
     $H^{1}(X, W\Omega ^{1}_{X}) \otimes _{W} K$, $\mathbf{F} = pF$, from the above we obtain
     $$0 \to H^{2}(X, \Z_{p}(1)) \otimes \Q _{p} \to H^{2}_{cris}(X/K) \overset{\mathbf{F}-p}{\to} H^{2}_{cris}(X/K) \to 0.$$
     The kernel of the right map is $\Q _{p}$-space whose dimension is the multiplicity of slope 1 of $H^{2}_{cris}(X/K)$.
     Therefore the Picard number of $X$ is equal to or less than the multiplicity of slope 1 of $H^{2} _{cris}(X/K)$.
     Since the Picard number of $X$ is positive, the multiplicity of slope 1
     of $H^{2}_{cris}(X/K)$ should be positive.
     If the height of $X$ is $h < \infty$, the multiplicity of
     slope 1 is  $22-2h>0$,
     so $h \leq 10$.
     The Newton polygon and the Hodge polygon of a K3 surface $X$ are following.
    \begin{center}
     \includegraphics[scale=0.7]{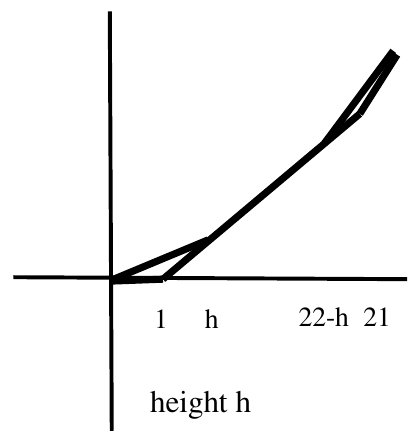}
     \hspace{2.5cm}
     \includegraphics[scale=0.7]{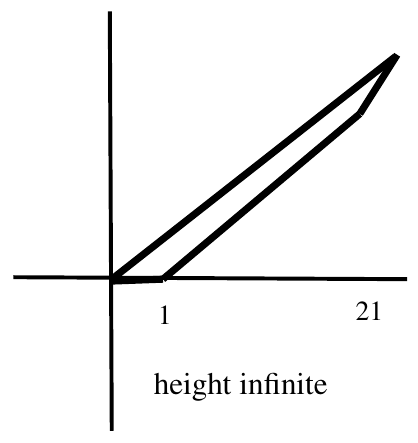}
    \end{center}
     Note that here the Newton polygon lies above the Hodge Polygon.(\cite{BO}, 8.39)

     When the height of $X$ is $\infty$, we say $X$ is Artin-supersingular. If the Picard number of $X$
     is maximal, 22, we say $X$ is supersingular. By the above argument, if $X$ is supersingular, $X$ is Artin-supersingular.
     It is conjectured that every Artin-supersingular K3 surface $X$ is supersingular.
     If the base field $k$ is a finite field this conjecture is equivalent to the Tate conjecture for Artin-supersingular K3 surfaces.
     The Tate conjecture holds for a K3 surface if the characteristic of the base field is $p\geq5$. (\cite{N2},\cite{NO}, \cite{C})
     It follows that if the base field is of characteristic $p \geq 5$, an Artin-supersingular K3 surface is supersingular by a deformation argument. (cf. \cite{A})
     It is also known that an Artin supersingular
      K3 surface equipped with an elliptic fibration is supersingular. (\cite{AS}) If the Picard number of a K3 surface $X$ is grater than 4, the Neron-Severi lattice $NS(X)$ has an isotropic vector, so it has an elliptic fibration. Hence an Artin-supersingular K3 surface with a Picard number greater than 4 is supersingular.

      When $X$ is a supersingular K3 surface, $NS(X)$ is a lattice of rank 22
     and the inclusion
     $$NS(X) \otimes \Z_{p} \hookrightarrow H^{2}(X, \Z _{p}(1)) \hookrightarrow H^{1} (X, W\Omega ^{1}_{X})$$
     gives
     an isomorphism
     $$NS(X) \otimes W \simeq H^{1}(X, W\Omega ^{1}_{X}). \ (\cite{N1},\textrm{ p.520})$$ This isomorphism preserves the lattice structures.
     The signature of $NS(X)$ is $(1,21)$ and the discriminant of $NS(X)$ is $-p^{2\sigma}$ with $1 \leq \sigma \leq 10$. We call this value $\sigma$ the Artin-invariant of $X$. It is known that the isomorphism class of the lattice $NS(X)$ is determined by 
     $\sigma$. (\cite{RS2})
     We denote this lattice
     by $\Lambda _{p,\sigma}$. It is also known that supersingular K3 surfaces of Artin invariant $\sigma$ forms $(\sigma -1)$-dimensional family
     and a supersingular K3 surface of Artin invariant 1 is unique up to isomorphism.
     For an explicit construction of $\Lambda _{p, \sigma}$, see \cite{Sh1}.
      \begin{prop} \label{sec2}
      When $X$ is a supersingular K3 surface over $k$, there exists a
     surjection $ H^{2}_{cris}(X/W)/(NS(X) \otimes W) \to H^{2}(X, \mathcal{O}_{X})$.
     \end{prop}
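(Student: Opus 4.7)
The plan is to construct a natural surjection $\phi\colon H^2_{cris}(X/W) \twoheadrightarrow H^2(X,\mathcal{O}_X)$, verify that it annihilates $NS(X)\otimes W$, and then factor it through the quotient. The argument is formal and in fact valid for any K3 surface; it uses only two ingredients already recalled in Section 2: the $E_1$-degeneration of the Hodge-to-de Rham spectral sequence, and the vanishing $H^3_{cris}(X/W)=0$.

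For the construction of $\phi$, the crystalline universal coefficient theorem combined with $H^3_{cris}(X/W)=0$ yields an isomorphism
$$H^2_{cris}(X/W)\otimes_W k \xrightarrow{\sim} H^2_{dr}(X/k),$$
so reduction modulo $p$ produces a surjection $H^2_{cris}(X/W)\twoheadrightarrow H^2_{dr}(X/k)$. Because the Hodge-to-de Rham spectral sequence degenerates at $E_1$, the edge map $H^2_{dr}(X/k)\twoheadrightarrow H^2(X,\mathcal{O}_X)$ is surjective with kernel $F^1 H^2_{dr}(X/k)=\mathbb{H}^2(X,\Omega_X^{\geq 1})$. Composing gives the desired $\phi$.

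To see that $\phi$ vanishes on $NS(X)\otimes W$, I would use that the embedding $NS(X)\otimes W \hookrightarrow H^2_{cris}(X/W)$ is the $W$-linear extension of the crystalline first Chern class $c_1^{cris}\colon \Pic(X)\to H^2_{cris}(X/W)$. By the standard compatibility of crystalline and de Rham Chern classes, the reduction of $c_1^{cris}(L)$ modulo $p$ equals $c_1^{dR}(L)\in H^2_{dr}(X/k)$, which is the image under the edge map of the class $d\log(L)\in H^1(X,\Omega^1_X)$ and therefore lies in $F^1 H^2_{dr}(X/k)$. Hence $\phi$ kills $c_1^{cris}(L)$ for every $L\in \Pic(X)$, and by $W$-linearity annihilates all of $NS(X)\otimes W$. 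It follows that $\phi$ factors through $H^2_{cris}(X/W)/(NS(X)\otimes W)$ to give the required surjection.

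I do not anticipate serious obstacles: the proof is essentially formal once the standard Chern class compatibility (via $d\log$) is invoked, a fact that is well documented in Illusie's work on the de Rham-Witt complex and in Berthelot's monograph on crystalline cohomology. It is worth noting that the supersingularity of $X$ is not really used in the argument; its relevance lies rather in the fact that then $NS(X)\otimes W$ and $H^2_{cris}(X/W)$ are both free $W$-modules of rank $22$, so the quotient is a finite-length $W$-module and the proposition exhibits $k=H^2(X,\mathcal{O}_X)$ as a nontrivial quotient of it.
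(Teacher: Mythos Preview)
Your argument is correct, but it is genuinely different from the paper's. The paper works with the slope spectral sequence coming from the de Rham--Witt complex rather than the Hodge filtration on $H^2_{dr}(X/k)$: supersingularity forces $H^0(X,W\Omega^2_X)=0$, so the slope filtration gives $F^1 H^2_{cris}(X/W)=NS(X)\otimes W$ and hence identifies the quotient with $\ker\bigl(d:H^2(X,W\mathcal{O}_X)\to H^2(X,W\Omega^1_X)\bigr)$; a length count using the discriminant then pins this down as $k[[V]]/V^{\sigma}$, which surjects onto $H^2(X,W\mathcal{O}_X)/V\simeq H^2(X,\mathcal{O}_X)$. Your route through $H^2_{cris}(X/W)\otimes_W k\simeq H^2_{dr}(X/k)\twoheadrightarrow H^2(X,\mathcal{O}_X)$ and the standard Chern-class compatibility $c_1^{cris}\bmod p = c_1^{dR}\in F^1$ is more elementary, avoids de Rham--Witt theory, and, as you observe, does not use supersingularity at all. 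What the paper's approach buys in exchange is the explicit structure $H^2_{cris}(X/W)/(NS(X)\otimes W)\simeq k[[V]]/V^{\sigma}$, which is finer information than the bare surjection; for the application in Theorem~\ref{th1} (transferring the $-1$ action to $H^2(X,\mathcal{O}_X)$), however, your argument is entirely sufficient.
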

     \begin{proof}
     For a projective smooth surface, the slope spectral sequence degenerates at
     $E_{2}$ level. Moreover the only possibly non-trivial map on $E_{1}$ level of the slope spectral sequence is $H^{2}(X,W\mathcal{O}_{X}) \to H^{2}(X,W\Omega ^{1} _{X})$. (\cite{I1}, p.619)
     When $X$ is a supersingular K3 surface $H^{2}_{cris}(X/K)$ does not have slope 2, so $H^{0}(X, W\Omega ^{2}_{X})=0$. It follows that $F^{1} H^{2} _{cris}(X/W)
     = NS(X) \otimes W$, here $F^{\cdot}$ means the filtration given by the slope spectral sequence. Therefore
     $$H^{2}_{cris}(X/W)/( NS(X) \otimes W)
     = \ker d: H^{2}(X, W\mathcal{O}_{X}) \to H^{2}(X, W\Omega ^{1}_{X}).$$
     Considering the discriminant, the length of $H^{2}_{cris}(X/W)/(NS(X) \otimes W)$ is $\sigma$.
     Since $H^{3}_{cris}(X/W)=0$, $d:H^{2}(X,W\mathcal{O}_{X}) \to H^{2}(X, W\Omega ^{1}_{X})$ is surjective and the length of $\ker d$ is $\sigma$. Therefore
     $$\ker d = H^{2}_{cris} (X/W) / (NS(X) \otimes W) = k[[V]]/ V ^{\sigma}.$$
     By the way, from the exact sequence
     $$ 0 \to W\mathcal{O}_{X} \overset {V} {\to} W\mathcal{O}_{X} \to \mathcal{O}_{X} \to 0,$$
     we have $H^{2}(X, \mathcal{O}_{X}) = H^{2}(X, W\mathcal{O}_{X}) /V$. Therefore
     there exists a canonical morphism $H^{2}_{cirs}(X/W)/(NS(X) \otimes W) \to H^{2}(X,\mathcal{O}_{X})$. \hspace{0.5cm}
     \end{proof}

     If the height of a K3 surface $X$ is $h < \infty$, $H^{2}(X, W\mathcal{O} _{X})$ is a finite free $W$-module, so the slope spectral sequence of $X$ degenerates
     at $E_{1}$ level. (\cite{I1})
     And since the Newton polygon of the F-crystal $(H^{2}_{cris}(X/W), \mathbf{F})$ touches the Hodge polygon
     at the two edge points, the F-crystal $H^{2}_{cris}(X/W)$ has a decomposition
     \begin{center}
     $ H^{2}_{cris}(X/W) = E_{1-1/h} \oplus E_{1} \oplus E_{1+1/h}$. (\cite{Ka}, 1.6.1)
     \end{center}
     Here F-crystal $E_{1-1/h}= (H^{2}(X, W\mathcal{O}_{X}), F)$ and $E_{1+1/h} = \mathfrak{Hom}(E_{1-1/h}, W[2])$.
     The F-crystal $E_{1}$ has constant Newton slopes 1, and the Hodge polygon of $E_{1}$ coincides with its Newton polygon.
     Hence $E_{1}$ is $p$-times of a unit crystal. The cup product induces a perfect paring on $H^{2}_{cris}(X/W)$ which is compatible with
     $\mathbf{F}$. For this paring, $E_{1-1/h}$ and $E_{1+1/h}$ are isotropic and
     dual to each other. $E_{1}$ with the restricted pairing is a unimodular $W$-lattice.
     Put $T = H^{2}(X, \Z _{p}(1)) = E_{1} ^{\mathbf{F}-p}$. $T$ is a
     free $\Z _{p}$ module of rank $22-2h$. $T$ with the induced paring is a $\Z _{p}$-unimodular lattice. A $\Z _{p}$-unimodular lattice
     is determined up to isomorphism by the rank and the discriminant. The discriminant of $T$ = $- \left( \frac{-1}{p} \right)$. (\cite{Og2}, p.363) Here $\left(
     \frac{\cdot}{\cdot} \right)$ is the Legendre symbol. We conclude that if the height of $X$ is $h< \infty$, the F-crystal $H^{2} _{cris}(X/W)$
     with the lattice structure is determined by $h$ and it has no more information. However, if $X$ is supersingular, this information actually determines the isomorphism class of $X$.
     \begin{thm}[\cite{Og2}, Crystalline Torelli theorem]
     $X$ and $Y$ are supersingular K3 surface defined over $k$. Let $\Psi : H^{2}_{crius}(X/W) \to H^{2} _{cris}(Y/W)$ be an isomorphism
     compatible with the Frobenius morphism and the cup product. Then $X$ and $Y$ are isomorphic.
     \end{thm}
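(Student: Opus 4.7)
The plan is to reduce the geometric isomorphism question to a matching of F-crystals with Néron–Severi data, and then invoke a moduli-theoretic argument via the period map for supersingular K3 surfaces. First, I would extract from $\Psi$ an isometry of Néron–Severi lattices. The earlier discussion in the excerpt identifies $NS(X)\otimes W$ with $H^1(X, W\Omega^1_X)$, which coincides with the slope-$1$ part of the F-isocrystal $H^2_{cris}(X/K)$ intersected with the integral lattice; this sublattice is intrinsic to the pair $(\mathbf{F},$ cup product$)$, since it is characterized as the maximal $\mathbf{F}$-stable sub-$W$-module on which $\mathbf{F}$ is divisible by $p$. Hence $\Psi$ automatically carries $NS(X)\otimes W$ onto $NS(Y)\otimes W$, and descending to $(\mathbf{F}-p)$-invariants of the Tate twist yields an isometry of $\Z$-lattices $\varphi: NS(X) \to NS(Y)$ compatible with all crystalline data.

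Second, with this datum in hand I would run a moduli argument. Consider a moduli space $\mathcal{M}_\sigma$ of marked supersingular K3 surfaces of Artin invariant $\sigma$, together with a period space $\mathcal{P}_\sigma$ whose points are abstract K3 crystals (F-crystals with a distinguished sublattice of discriminant $-p^{2\sigma}$ and signature $(1,21)$ playing the role of Néron–Severi). The period map $\pi: \mathcal{M}_\sigma \to \mathcal{P}_\sigma$ sends $X$ to the pair $(H^2_{cris}(X/W), NS(X)\otimes W)$, and the Torelli statement becomes injectivity of $\pi$. This I would establish by combining two inputs: locally, $\pi$ is étale, via Grothendieck–Messing theory applied to the formal Brauer group, which identifies the deformation functor of $X$ with the functor of admissible Hodge filtrations on $H^2_{cris}(X/W)$ compatible with the prescribed sublattice; globally, one uses the base case $\sigma=1$, in which the supersingular K3 surface is the unique Kummer surface $\mathrm{Km}(E\times E)$ for a supersingular elliptic curve $E$. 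Starting from this reference point and using connectedness of $\mathcal{P}_\sigma$ (a lattice-theoretic calculation with the explicit structure of $\Lambda_{p,\sigma}$ from \cite{Sh1}), one propagates the isomorphism globally to obtain $X\cong Y$.

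The main obstacle is the local étaleness statement. It demands a careful identification of the infinitesimal deformation functor of a supersingular K3 surface with the functor of filtered F-crystals equipped with the prescribed sublattice. This uses Grothendieck–Messing for the Brauer group, the Berthelot–Ogus comparison between $H^2_{dR}(X/k)$ and the reduction of $H^2_{cris}(X/W)$, and the observation from Proposition \ref{sec2} that the quotient $H^2_{cris}(X/W)/(NS(X)\otimes W)$ carries exactly the information needed to reconstruct $H^2(X, \mathcal{O}_X)$. Once local étaleness is in place, the passage from an abstract lattice/crystal isomorphism to a geometric isomorphism is largely formal, relying on the $\sigma = 1$ basepoint and the global geometry of $\mathcal{P}_\sigma$.
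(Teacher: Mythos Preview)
The paper does not prove this statement: it is quoted as the Crystalline Torelli theorem with a bare citation to \cite{Og2}, and the paper uses it as a black box. So there is no ``paper's own proof'' to compare against; your proposal is an attempt to sketch Ogus's argument, not to reconstruct anything the present paper does.

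That said, your sketch has a genuine gap. You propose to establish local \'etaleness of the period map ``via Grothendieck--Messing theory applied to the formal Brauer group''. But for a supersingular K3 surface the formal Brauer group has infinite height: $F=0$ on $H^{2}(X,W\mathcal{O}_{X})\simeq k[[V]]$, so $\widehat{Br}_{X}\simeq \widehat{\mathbb{G}}_{a}$ is unipotent and is not a $p$-divisible group. Grothendieck--Messing theory applies to Barsotti--Tate groups, and there is no crystal of the usual type attached to $\widehat{\mathbb{G}}_{a}$ whose deformations are controlled by admissible filtrations. Ogus's actual argument does not go through the formal Brauer group at all; the local study of the period map uses the Kodaira--Spencer description of first-order deformations of $X$ (via $H^{1}(X,T_{X})$ and its identification with a piece of the Hodge filtration on $H^{2}_{dR}$), together with his theory of K3 crystals and characteristic subspaces. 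The global injectivity is likewise not obtained by ``propagating from the $\sigma=1$ Kummer point along a connected period space''; étaleness plus connectedness gives surjectivity-type statements, not injectivity. Ogus instead proves a separatedness/properness statement for the period map over a suitable moduli space, which combined with \'etaleness yields the Torelli conclusion. So while your first paragraph (extracting the N\'eron--Severi isometry from $\Psi$) is fine, the proposed mechanism for both the local and global steps would not go through as written.
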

     For a supersingular K3 surface $X$, the $\Z _{p}$-lattice $NS(X) \otimes \Z _{p}$ has a decomposition
     $$ NS(X) \otimes \Z _{p} = E_{0} \oplus E_{1}.$$
     Here $E_{1}$ is a unimodular $\Z _{p}$-lattice of rank $22-2\sigma$ and
     $E_{0}$ is the $p$ times of a unimodular $\Z _{p}$-lattice of rank $2\sigma$.
     Recall that $H^{2}_{cris}(X/W) \otimes k \simeq H^{2} _{dr}(X/k)$. Composing with $NS(X) \hookrightarrow H^{2}_{cris}(X/W)$,
     we have the induced morphism $\lambda : NS(X) \otimes k \to H^{2}_{dr}(X/k)$. We call $K_{X} = \ker \lambda \subseteq NS(X) \otimes k$
     the period space of a supersingular K3 surface $X$.
     $K_{X}$ is a $\sigma$-dimensional isotropic subspace of $E_{0} \otimes k$.
      Now we state another form of crystalline Torelli theorem which we will use later in this paper.
     \begin{thm}[\cite{Og2}, p.371]
     $X$ and $Y$ are supersingular K3 surfaces defined over $k$. $\Psi : NS(X) \to NS(Y)$ is an isometry. If $\Psi$ takes
     the ample cone of $NS(X) \otimes \R$ into the ample cone of $NS(Y) \otimes \R$ and $K_{X}$ onto $K_{Y}$, then
     $\Phi$ is induced by an isomorphism $\psi : Y \to X$
     \end{thm}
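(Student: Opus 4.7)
The plan is to reduce this statement to the first form of the crystalline Torelli theorem (Theorem 2.3 in the excerpt), which converts an $\mathbf{F}$-crystal isometry $\tilde\Psi:H^{2}_{cris}(X/W)\to H^{2}_{cris}(Y/W)$ into an isomorphism of surfaces. The entire problem is therefore to promote the given lattice isometry $\Psi$ on Néron--Severi groups to an $\mathbf{F}$-crystal isometry on second crystalline cohomology, using the period-space and ample-cone hypotheses to supply the extra data.

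First I would tensor $\Psi$ up to $W$ and then to $K$. Since $X$ is supersingular, every Newton slope of $H^{2}_{cris}(X/K)$ equals $1$, so via the identification $NS(X)\otimes W\simeq H^{1}(X,W\Omega^{1}_{X})$ (the isometric embedding recalled in Section 2) the $K$-vector space $NS(X)\otimes K$ is canonically identified with $H^{2}_{cris}(X/K)$, and similarly for $Y$. Thus $\Psi\otimes K$ is already an isometry of the underlying $K$-spaces. Inside $NS(X)\otimes K$ we have the chain
$$p\cdot H^{2}_{cris}(X/W)\ \subset\ NS(X)\otimes W\ \subset\ H^{2}_{cris}(X/W),$$
where each inclusion has cokernel of length $\sigma$ (by Proposition 2.2 and the discriminant $-p^{2\sigma}$ of $NS(X)$). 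The key point is that the cokernel $H^{2}_{cris}(X/W)/(NS(X)\otimes W)\cong k[[V]]/V^{\sigma}$, viewed modulo $p$ inside $H^{2}_{dr}(X/k)$, is precisely the image of the reduction map $NS(X)\otimes k \to H^{2}_{dr}(X/k)$ divided out, while its ``dual'' — the failure of $NS(X)\otimes W$ to surject onto $H^{2}_{dr}(X/k)$ — is exactly the period space $K_{X}$. Because $\Psi$ is an isometry carrying $K_{X}$ onto $K_{Y}$, the $K$-linear extension $\Psi\otimes K$ therefore carries the $W$-lattice $H^{2}_{cris}(X/W)$ isomorphically onto $H^{2}_{cris}(Y/W)$ and preserves the cup product, producing a lattice isometry $\tilde\Psi$.

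Next I would verify that $\tilde\Psi$ commutes with Frobenius. On the slope-$1$ part $NS\otimes W$, $\mathbf{F}$ acts as $p\cdot\mathrm{id}$ on both sides, so $\Psi_{W}$ is trivially Frobenius-equivariant there. Because $X$ is supersingular, the whole F-isocrystal $(H^{2}_{cris}(X/K),\mathbf{F})$ is purely of slope $1$, hence is uniquely determined, up to Frobenius-compatible isometry, by the $W$-lattice it carves out of $NS(X)\otimes K$ together with the cup product — and that $W$-lattice is determined by $K_{X}$ as above. Transporting this characterization through $\Psi$ shows that the Frobenius on $H^{2}_{cris}(Y/W)$ pulled back through $\tilde\Psi$ agrees with the given Frobenius on $H^{2}_{cris}(X/W)$. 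Having produced an isometry $\tilde\Psi$ of F-crystals compatible with the cup product, Theorem 2.3 yields an isomorphism $\psi:Y\to X$ with $\psi^{*}=\tilde\Psi$. The restriction $\psi^{*}|_{NS(X)}$ agrees with $\Psi$ up to the sign ambiguity coming from the automorphism $-\mathrm{id}$ of $H^{2}_{cris}$, and the hypothesis that $\Psi$ maps the ample cone into the ample cone — rather than its negative — pins down the correct choice.

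The main obstacle is the Frobenius-compatibility step: merely being an isometry of $W$-lattices that respects $K_{X}\mapsto K_{Y}$ does not automatically intertwine the two Frobenii. This is precisely the content of Ogus's theory of characteristic subspaces, which says that for a supersingular K3 crystal the Frobenius structure is recovered from the Néron--Severi lattice together with its characteristic subspace. Making this recovery explicit — in particular checking that the $W$-lattice position dictated by $K_{X}$ is preserved by $p^{-1}\mathbf{F}$ and that this forces the transported operator to match $\mathbf{F}_{Y}$ — is where the real work lies; once that is in hand, the reduction to Theorem 2.3 and the ample-cone sign fix are formal.
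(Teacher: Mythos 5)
The paper itself contains no proof of this statement: both it and Theorem 2.3 are quoted verbatim from Ogus [Og2], so your proposal must be judged on its own merits rather than against an argument in the text. Your skeleton --- identify $H^{2}_{cris}(X/K)$ with $NS(X)\otimes K$, show the hypotheses force $\Psi\otimes K$ to respect the crystal structure, then invoke a Torelli theorem --- is the right one, but two of your three steps are off.

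First, you have the difficulty exactly backwards. Frobenius compatibility, which you call ``where the real work lies,'' is trivial: for supersingular $X$ the sequence $0 \to NS(X)\otimes\Z_p \to H^{2}(X,\Z_p(1)) \to T_p(Br(X))\to 0$ shows $NS(X)\otimes \Z_{p}$ \emph{is} the Tate module of the crystal, so $\mathbf{F}t=pt$ for $t\in NS(X)\otimes\Z_{p}$, and by $\sigma$-semilinearity $\mathbf{F}=p(\mathrm{id}\otimes\sigma)$ on all of $NS(X)\otimes K$. (Your assertion that $\mathbf{F}$ acts as $p\cdot\mathrm{id}$ on $NS(X)\otimes W$ is false precisely because of semilinearity; it acts that way only on $NS(X)\otimes\Z_{p}$.) Since $\Psi$ is defined over $\Z$, the map $\Psi\otimes K$ commutes with $\mathrm{id}\otimes\sigma$ and hence with $\mathbf{F}$; no theory of characteristic subspaces is needed. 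Conversely, the lattice step, which you explicitly defer, is the actual content, and it has a short proof you should supply: writing $H=H^{2}_{cris}(X/W)$, unimodularity gives $H=H^{*}\subset (NS(X)\otimes W)^{*}$; since $NS(X)^{*}/NS(X)\cong(\Z/p)^{2\sigma}$ is killed by $p$, we get $pH\subset NS(X)\otimes W$; and then $pH/p(NS(X)\otimes W)=\ker\bigl(NS(X)\otimes k\to H\otimes k\bigr)=K_{X}$, so $H=p^{-1}\widetilde{K}_{X}$, where $\widetilde{K}_{X}$ is the preimage of $K_{X}$ in $NS(X)\otimes W$. Hence $\Psi(K_{X})=K_{Y}$ really does force $\Psi\otimes K$ to carry $H^{2}_{cris}(X/W)$ onto $H^{2}_{cris}(Y/W)$ isometrically.

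Second, and fatally for the write-up as it stands, Theorem 2.3 cannot finish the argument. As stated, it concludes only that $X$ and $Y$ are isomorphic; it does not produce an isomorphism $\psi$ with $\psi^{*}=\tilde\Psi$, which is what the present statement demands (``$\Psi$ is induced by an isomorphism''). Your proposed repair --- using the ample-cone hypothesis to resolve ``the sign ambiguity coming from $-\mathrm{id}$'' --- fails because the ambiguity is not a sign: the group of self-isometries of the K3 crystal compatible with the cup product contains the entire group $W_{X}$ generated by the $(-2)$-reflections together with $-\mathrm{id}$, and an abstract isomorphism $X\cong Y$ gives no control over which crystal isometry it induces. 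What is needed is Ogus's strong crystalline Torelli theorem, in which ``maps the ample cone to the ample cone'' appears as a \emph{hypothesis} and ``is induced by a unique isomorphism'' is the \emph{conclusion} --- i.e., essentially the statement being proved. The reduction has to bottom out in that strong form (which is exactly why the paper cites [Og2, p.~371] rather than deriving this from Theorem 2.3); it cannot be recovered from the weak form plus a sign fix.
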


     \section{Lattices}
     In this section, we recall some facts on lattices.

     We denote an even unimodular $\Z$-lattice of rank 2 with signature $(1,1)$
     $$ \left(
     \begin{array}{cc}
     0 &1 \\ 1 & 0
     \end{array} \right)$$
     by $U$.
     We denote the Cartan matrix of the root system $E_{8}$ by $E_{8}$ again.
     $E_{8}$ is a negative definite even unimodular $\Z$-lattice of rank 8.
     An even unimodular lattice of signature $(s,t)$ exists if and only if
     $s-t \equiv 0$ (mod 8) and it is a direct sum of finite number of $U$ and $\pm E_{8}$.
     In particular, it is unique up to isomorphism. When $Y$ is an Enriques surface over
     an algebraically closed field $k$, the numerical lattice of $Y$ is a unimodular even lattice
     of signature (1,9). (\cite{CD}, p.117) It is isomorphic to $U \oplus E_{8}$. We denote this lattice by $\Gamma$ and call
     the Enriques lattice.
     If $X$ is a K3 surface defined over $\C$, the singular cohomology $H^{2}(Y,\Z)$ is a free abelian group
     of rank 22 equipped with an even unimodular lattice structure of signature $(3,19)$. Hence the lattice
     $H^{2}(X,\Z)$ is isomorphic to $U^{\oplus 3} \oplus E_{8}^{\oplus 2}$. We denote this lattice by $\Lambda$ and call
     the K3 lattice.

     Let $L$ be an even $\Z$-lattice and $L^{*}= \Hom (L , \Z)$ be the dual of $L$. There is a natural
     embedding $L \hookrightarrow L^{*}$ and the cokernel is a finite abelian group. We call this finite group the discriminant group
     of $L$ and denote by $l(L)$.
     The order of $l(L)$ is the absolute value of $d(L)$, the discriminant of $L$. The discriminant group $l(L)$
     is equipped with a non-degenerated bilinear form $b_{L} : x,y \mapsto (x,y) + \Z$ with values in $\Q /\Z$ and with the associated
     quadratic form $q_{L} : x \mapsto (x,x) + 2\Z$ with values in $\Q / 2\Z$.

     Let $M \hookrightarrow L$ be an inclusion of $\Z-$lattices of same rank. We say $L$ is  an over lattice of $M$.
     Then there is a chain of embeddings of lattices of same rank $M \subset L \subset L^{*} \subset M^{*}$. We denote $S(L) = L/M \subset l(M)$.
     Then $S(L)$ is an isotropic subgroup of $l(M)$ and $S(L) ^{\perp} = L^{*}/M$ by definition.
     \begin{prop}[\cite{Ni}, p.110]
     The correspondence $L \mapsto S(L)$ gives a bijection between the over lattices of $M$ and the isotropic subgroups of $l(M)$.
     \end{prop}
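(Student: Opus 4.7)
The plan is to construct an explicit inverse to the map $L \mapsto S(L)$ and verify that the two are mutually inverse. Given an isotropic subgroup $H \subset l(M)$, set $L_{H} := \pi^{-1}(H) \subset M^{*}$, where $\pi : M^{*} \to M^{*}/M = l(M)$ is the quotient. Because $[M^{*}:M]$ is finite, $L_{H}$ is a free abelian group of the same rank as $M$ sandwiched as $M \subset L_{H} \subset M^{*}$, so it will qualify as an overlattice of $M$ provided we show the ambient $\Q$-valued pairing restricts to an even integral pairing on $L_{H}$.

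This is the core step. For $x, y \in L_{H}$ with images $\bar{x}, \bar{y} \in H$, the rational number $\langle x, y\rangle$ reduces mod $\Z$ to $b_{M}(\bar{x}, \bar{y})$ and $\langle x, x\rangle$ reduces mod $2\Z$ to $q_{M}(\bar{x})$, so integrality and evenness of $L_{H}$ are equivalent to $b_{M}|_{H \times H} = 0$ and $q_{M}|_{H} = 0$ respectively. The latter is precisely the isotropy hypothesis. For the former, I would invoke the polarization identity $q_{M}(\bar{x}+\bar{y}) - q_{M}(\bar{x}) - q_{M}(\bar{y}) = 2\, b_{M}(\bar{x}, \bar{y})$ in $\Q/2\Z$; combined with $q_{M}|_{H} = 0$ this forces $2\, b_{M}(\bar{x}, \bar{y}) = 0 \in \Q/2\Z$, and the injectivity of the doubling map $\Q/\Z \to \Q/2\Z$ then yields $b_{M}(\bar{x}, \bar{y}) = 0$ in $\Q/\Z$, as required.

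Finally I would check mutual inversion. By construction $S(L_{H}) = L_{H}/M = \pi(L_{H}) = H$, and for any overlattice $M \subset L$ one has $L \subset M^{*}$ (since $L$ is integral and contains $M$), whence $L = \pi^{-1}(L/M) = L_{S(L)}$. The only genuine obstacle in the argument is the implication $q_{M}|_{H} = 0 \Rightarrow b_{M}|_{H \times H} = 0$; the remainder is elementary bookkeeping inside the chain $M \subset L \subset M^{*}$.
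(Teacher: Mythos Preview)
The paper does not prove this proposition; it is simply quoted from Nikulin \cite{Ni} without argument. Your proof is correct and is essentially the standard one found there: pull back an isotropic subgroup along $\pi : M^{*}\to l(M)$ to get an even overlattice, and check that this inverts $L\mapsto S(L)$.

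One minor simplification: once you know $q_{M}|_{H}=0$, evenness of $L_{H}$ is immediate, and evenness already forces integrality via
\[
2\langle x,y\rangle \;=\; \langle x+y,x+y\rangle - \langle x,x\rangle - \langle y,y\rangle \in 2\Z,
\]
so the separate verification that $b_{M}|_{H\times H}=0$ through the doubling map $\Q/\Z \hookrightarrow \Q/2\Z$, while correct, is redundant. Conversely, you should also note (it is implicit in your bookkeeping but not stated) that for an even overlattice $L$ the subgroup $S(L)=L/M$ is indeed isotropic for $q_{M}$, since $\langle x,x\rangle\in 2\Z$ for $x\in L$; this is what makes the forward map well-defined into the set of isotropic subgroups.
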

      We assume $M \hookrightarrow L$ is a primitive embedding of even lattices and
     $n= \rank L$, $r =\rank M$. $(r \leq n)$ $M ^{\perp}$ is the orthogonal complement of $M$ in $L$. $M^{\perp}$ is
     also a primitive sublattice of $L$ and $\rank M^{\perp}= n-r$. $L$ is an over lattice of $M \oplus M^{\perp}$.
     \begin{prop}[\cite{Ni}, p.111] \label{Ni2}
     For $S(L) \subseteq l(M \oplus M^{\perp}) = l(M) \oplus l(M^{\perp})$, the projection $S(L) \to L(M)$ and $S(L) \to l(M^{\perp})$ are
     injective. Let $S_{1}$ and $S_{2}$ be the image of $S(L)$ by the above two projections and $\gamma : S_{1} \to S_{2}$ be the isomorphism
     compatible with the projections.
     Let $\phi : M \to M$ and $\psi : M ^{\perp} \to M^{\perp}$ be isometries.
     $\phi \oplus \psi$ extends to an isometry of $L$ if and only if $\bar{\phi} : l(M) \to l(M)$ and $\bar{\psi} : l(M^{\perp})
     \to l(M^{\perp})$ preserves
     $S_{1}$ and $S_{2}$ respectively and $\gamma \circ \bar{\phi} = \bar{\psi} \circ \gamma$.
     \end{prop}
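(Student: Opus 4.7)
My plan is to deduce everything from Proposition 3.1, which identifies overlattices of $M \oplus M^{\perp}$ with isotropic subgroups of $l(M) \oplus l(M^{\perp})$. Under this correspondence $L$ is the overlattice attached to the isotropic subgroup $S(L)$, and an isometry of $M \oplus M^{\perp}$ extends to $L$ exactly when the automorphism it induces on the discriminant group preserves $S(L)$. Once this is granted, the proposition is just a matter of rewriting ``preserves $S(L)$'' in the language of the triple $(S_{1}, S_{2}, \gamma)$.

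I would first establish the injectivity of the two projections $S(L) \to l(M)$ and $S(L) \to l(M^{\perp})$. Given $\ell \in L$, the orthogonal decomposition over $\Q$ yields $\ell = x + y$ with $x \in M^{*}$ and $y \in (M^{\perp})^{*}$ (the integrality of $x$ follows by pairing $\ell$ against $M$, and similarly for $y$), so the map $L \to l(M) \oplus l(M^{\perp})$ descends to an injection of $S(L)$. If the class of $\ell$ projects trivially to $l(M)$, then $x \in M \subset L$, so $y = \ell - x$ lies in $L$; since $y \in M^{\perp} \otimes \Q$ and $M^{\perp}$ is primitive in $L$, this forces $y \in M^{\perp}$ and hence $\ell \in M \oplus M^{\perp}$. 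The other projection is symmetric, using primitivity of $M$. Consequently $S(L)$ is the graph of a well-defined isomorphism $\gamma : S_{1} \to S_{2}$, which is also automatically an isometry with respect to the induced forms because $S(L)$ is isotropic in $l(M) \oplus l(M^{\perp})$.

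With the two projections injective, the extension statement is a short verification. The isometry $\phi \oplus \psi$ of $M \oplus M^{\perp}$ induces an isometry $\bar\phi \oplus \bar\psi$ of the discriminant form on $l(M) \oplus l(M^{\perp})$, and by Proposition 3.1 it lifts to an isometry of $L$ if and only if $\bar\phi \oplus \bar\psi$ stabilises the isotropic subgroup $S(L)$. Because $S(L) = \{(s, \gamma(s)) : s \in S_{1}\}$, this stabilisation translates term-by-term into the three conditions $\bar\phi(S_{1}) = S_{1}$, $\bar\psi(S_{2}) = S_{2}$, and $\bar\psi \circ \gamma = \gamma \circ \bar\phi$, which is exactly the statement of the proposition. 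The only step where anything nontrivial happens is the injectivity argument, where primitivity of both embeddings is used in an essential way; the rest is formal once Proposition 3.1 is in hand.
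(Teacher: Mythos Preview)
Your argument is correct. The paper itself does not supply a proof of this proposition; it is simply quoted from \cite{Ni}, so there is nothing to compare against directly. Your approach---reducing to Proposition~3.1 and then identifying $S(L)$ with the graph of $\gamma$---is the standard route, and your use of primitivity of $M$ and $M^{\perp}$ to obtain injectivity of the two projections is exactly what is needed.
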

     \begin{prop}[\cite{Sh3}, p.915] \label{SH1}
     Let $M \hookrightarrow L$ be a primitive embedding of even lattices. If $d(M)$ and $d(L)$ are relatively prime,
     then
     $$(l(M^{\perp}), q_{M^{\perp}}) =
     (l(M), q_{M}) \oplus (l(L), q{L}).$$
     Moreover $|S(L)| = |l(M)|$ and the projection $S(L) \to l(M)$ is bijective.
     \end{prop}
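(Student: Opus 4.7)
The plan is to exploit the chain of inclusions
$$M \oplus M^{\perp} \;\subseteq\; L \;\subseteq\; L^{*} \;\subseteq\; M^{*} \oplus (M^{\perp})^{*}$$
together with the coprimality hypothesis. Write $a = |S(L)| = [L : M \oplus M^{\perp}]$. Taking $\Z$-duals of the first inclusion gives $[M^{*} \oplus (M^{\perp})^{*} : L^{*}] = a$, and since $[L^{*} : L] = |l(L)|$, multiplicativity of indices yields the identity
$$|l(M)| \cdot |l(M^{\perp})| \;=\; a^{2} \cdot |l(L)|.$$

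The first task is to show $|S(L)| = |l(M)|$, equivalently that the projection $p_{1} : S(L) \to l(M)$ is a bijection. Injectivity is Proposition \ref{Ni2}, so $a$ divides $|l(M)|$; write $|l(M)| = ab$ and $|l(M^{\perp})| = ac$. Substituting into the identity above gives $|l(L)| = bc$, so $b$ divides both $|l(M)|$ and $|l(L)|$. The coprimality assumption then forces $b = 1$, whence $a = |l(M)|$ and $|l(M^{\perp})| = |l(M)| \cdot |l(L)|$.

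For the decomposition of $l(M^{\perp})$, set $\gamma = p_{2} \circ p_{1}^{-1} : l(M) \to l(M^{\perp})$, an injection with image $S_{2} := p_{2}(S(L))$. Isotropy of $S(L) = \{(s,\gamma(s)) : s \in l(M)\}$ with respect to $q_{M} \oplus q_{M^{\perp}}$ gives $q_{M^{\perp}}(\gamma(s)) \equiv -q_{M}(s) \pmod{2\Z}$, and similarly for the bilinear form, so $\gamma$ identifies $(S_{2}, q_{M^{\perp}}|_{S_{2}})$ with $(l(M), -q_{M})$. Let $S_{2}^{\perp}$ denote the orthogonal of $S_{2}$ inside $l(M^{\perp})$, of order $|l(L)|$. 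Since $S_{2} \cap S_{2}^{\perp}$ is a subgroup of both $S_{2}$ and $S_{2}^{\perp}$, its order divides $\gcd(|l(M)|, |l(L)|) = 1$; therefore
$$l(M^{\perp}) \;=\; S_{2} \oplus S_{2}^{\perp}$$
is an orthogonal direct sum. Finally, the assignment $t \mapsto (0,t)$ sends $S_{2}^{\perp}$ into $S(L)^{\perp} \subseteq l(M) \oplus l(M^{\perp})$ (the condition $t \in S_{2}^{\perp}$ is precisely the vanishing of the pairing against every $(s',\gamma(s'))$), and comparing orders identifies $S_{2}^{\perp}$ with $S(L)^{\perp}/S(L) = l(L)$ as quadratic forms.

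The only substantive step is the coprimality-based counting that yields $b = 1$; everything else is formal bookkeeping, given the two injections $p_{1}, p_{2}$ supplied by Proposition \ref{Ni2}. A minor caveat worth flagging in the final write-up is that the construction naturally produces $(l(M), -q_{M})$ on the first summand rather than $(l(M), q_{M})$, so the stated decomposition should be read modulo the usual sign convention for discriminant forms of orthogonal complements.
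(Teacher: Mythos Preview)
Your argument is correct and is essentially the standard Nikulin-style proof of this fact. Note, however, that the paper does not supply its own proof of Proposition~\ref{SH1}: it is quoted from \cite{Sh3}, p.~915, so there is nothing in the paper to compare your approach against. Your counting argument ($|l(M)|\cdot|l(M^{\perp})| = a^{2}|l(L)|$ together with the two injections from Proposition~\ref{Ni2} and the coprimality hypothesis forcing $b=1$) is exactly how this is usually done, and the identification of $S_{2}^{\perp}$ with $l(L)$ via $S(L)^{\perp}/S(L)$ is clean.

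Your caveat about the sign is well taken and worth keeping: the graph construction genuinely produces $(l(M),-q_{M})\oplus(l(L),q_{L})$, and the formula as printed in the paper should indeed carry a minus sign on $q_{M}$ (this is the standard form of the statement in Nikulin's theory). In the applications later in the paper only the underlying group decomposition $l(M^{\perp})\simeq l(M)\oplus l(L)$ and the order $|S(L)|=|l(M)|$ are used, so the sign discrepancy is harmless there, but it is good that you flagged it.
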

     Now we recall Shimada's result on a comparison of a primitive embedding of an even lattice into $\Lambda$ and $\Lambda _{p, \sigma}$.
     Let us consider a condition for $\Z$-lattices $L$ and $M$,
     $$\Emb (M,L) : \textrm{there exists a primitive embedding of } M \textrm{ into }L. $$
     Suppose $d$ is an integer, $p$ is a prime number which does not divide $2d$ and $\sigma$ is an positive integer less than 11.
     We consider the following condition
     $$\Arth (p, \sigma , d) : \left ( \frac{(-1)^{\sigma +1} d}{p} \right) = -1.$$
     Here $\left ( \frac{\cdot}{\cdot} \right )$ means the Legendre symbol.

     \begin{prop}[\cite{Sh2}, p.396] \label{SH2}
     Assume $M$ is an even lattice of rank $r$ and of signature $(e_{+},e_{-})$
     such that $e_{+} \leq 1$, $e_{-} \leq 19$. $d= d(M)$ is the discriminant of $M$ and
     $p$ is a prime number which does not divide $2d$.

      (1) If $r > 22-2\sigma$, $\Emb ( M, \Lambda _{p, \sigma})$ does not hold.

      (2) If $r < 22- 2\sigma$, $\Emb ( M ,\Lambda _{p, \sigma})$ is equivalent to $\Emb (M , \Lambda)$.

      (3) If $r= 22 - 2\sigma$, $\Emb ( M, \Lambda _{p, \sigma})$ is equivalent to $\Emb (M, \Lambda)$ and $\Arth ( p, \sigma , d)$.

     \end{prop}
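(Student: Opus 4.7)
The plan is to recast both sides of the equivalence as existence problems for an even $\Z$-lattice with prescribed signature and discriminant form, and then to compare the two existence conditions. Since $\gcd(p, 2d) = 1$, Proposition \ref{SH1} forces any primitive embedding $M \hookrightarrow \Lambda_{p,\sigma}$ with orthogonal complement $N$ to satisfy
\[
(l(N), q_N) \;\simeq\; (l(M), -q_M) \oplus (l(\Lambda_{p,\sigma}), q_{\Lambda_{p,\sigma}}),
\]
and Proposition \ref{Ni2} shows conversely that any abstract even $N$ of the correct signature $(1-e_+, 21-e_-)$ realizing this form produces an embedding by inverse gluing (with the gluing data automatically given, since the two summands are coprime in order). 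Thus $\Emb(M, \Lambda_{p,\sigma})$ becomes: existence of an even lattice of rank $22-r$, signature $(1-e_+, 21-e_-)$, and the stipulated discriminant form. The analogous translation for the unimodular $\Lambda$ asks for an even $N'$ of rank $22-r$, signature $(3-e_+, 19-e_-)$, and discriminant form $(l(M), -q_M)$.

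Case (1) is immediate. If $r > 22-2\sigma$, then the hypothetical complement $N$ has rank $22-r < 2\sigma$, but its discriminant group must contain $l(\Lambda_{p,\sigma}) \cong (\Z/p)^{2\sigma}$ as a direct summand, which requires at least $2\sigma$ generators. This contradicts the general bound $\ell(l(N)) \leq \rank(N)$.

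For cases (2) and (3) I would appeal to Nikulin's realization theorem (\cite{Ni}, 1.10.1 and 1.16.5) for even lattices with given signature and discriminant form, and split $N = N' \oplus N_p$ where $N_p$ carries the $p$-torsion of the discriminant (legitimate since $\gcd(p,2d)=1$). The away-from-$p$ conditions coincide with the Nikulin conditions for $\Emb(M, \Lambda)$, and the $p$-local conditions are controlled entirely by $N_p$. In case (2), the strict inequality $\rank(N) - 2\sigma > 0$ provides the signature slack needed so that an even $N_p$ of rank $2\sigma$ realizing $q_{\Lambda_{p,\sigma}}$ can always be constructed and appended to $N'$; Nikulin's extra inequalities reduce to those already satisfied by $N'$, yielding the equivalence $\Emb(M, \Lambda_{p,\sigma}) \Leftrightarrow \Emb(M, \Lambda)$. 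In case (3), $\rank N = 2\sigma$ forces $N_p$ to be the whole complement, hence a $p$-modular lattice of rank $2\sigma$; after rescaling by $1/p$, $N_p$ becomes a $\Z_p$-unimodular lattice of rank $2\sigma$, classified by its determinant modulo squares. Matching this with the known discriminant $-\left(\frac{-1}{p}\right)$ of the unimodular part of $\Lambda_{p,\sigma}\otimes\Z_p$ recorded in Section 2, and comparing with $d(N) \cdot d(M) \equiv \pm\,p^{2\sigma} \pmod{\text{squares}}$ via the signature, produces exactly the Legendre-symbol condition $\left(\frac{(-1)^{\sigma+1} d}{p}\right) = -1$, i.e.\ $\Arth(p,\sigma,d)$.

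\textbf{Main obstacle.} I expect case (3) to be the delicate part. The Nikulin-style realization theorem has to be applied with rank exactly equal to the length of the discriminant group, which is the boundary case where the local conditions at $p$ become a genuine obstruction rather than a consequence of slack. Pinning down the precise sign $(-1)^{\sigma+1}$ requires careful bookkeeping: the minus sign coming from $-q_M$ on the complement, the parity of $\sigma$ controlling the determinant of a rank-$2\sigma$ $p$-modular lattice of the required signature, and the contribution of $\left(\frac{-1}{p}\right)$ from the unimodular part of $\Lambda_{p,\sigma}\otimes\Z_p$ must all be combined correctly. Beyond this, one must verify that the $2$-adic and real conditions in Nikulin's theorem depend only on $M$ (and not on the choice of target $\Lambda$ vs.\ $\Lambda_{p,\sigma}$), so that they cancel on comparing the two existence problems.
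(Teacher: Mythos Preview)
The paper does not supply a proof of this proposition; it is quoted from Shimada \cite{Sh2}, p.~396, and used throughout as a black box. There is therefore no in-paper argument to compare your proposal against.

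That said, your sketch follows the standard route (and essentially the one Shimada takes in \cite{Sh2}): translate $\Emb(M,L)$ into the existence of an even lattice of the correct signature and discriminant form playing the role of the orthogonal complement, then invoke Nikulin's existence criteria \cite{Ni} and compare the local conditions for $L=\Lambda$ versus $L=\Lambda_{p,\sigma}$. Your treatment of case~(1) via the bound $\ell(l(N))\le\rank N$ is exactly right.

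One caution on cases~(2)--(3): the device ``split $N=N'\oplus N_p$'' needs care. Such an orthogonal splitting is not guaranteed for a \emph{given} complement $N$; it is only something you can try when constructing $N$ from scratch, so it gives one implication but not the other. More seriously, the complements in $\Lambda$ and in $\Lambda_{p,\sigma}$ have the same rank $22-r$ but \emph{different} signatures, $(3-e_+,19-e_-)$ versus $(1-e_+,21-e_-)$, so the $N'$ in your splitting (of rank $22-r-2\sigma$) is not literally the $\Lambda$-complement, and ``the away-from-$p$ conditions coincide'' is not automatic from your formulation. The cleaner way through is to apply Nikulin's existence theorem directly to each complement and observe that, since $p\nmid 2d$, the $2$-adic and $\ell$-adic conditions for $\ell\mid d$ are identical in the two problems, the signature-mod-$8$ condition shifts by exactly the signature contribution of $q_{\Lambda_{p,\sigma}}$, and the only genuinely new constraint is the $p$-adic one, vacuous when $22-r>2\sigma$ and equal to $\Arth(p,\sigma,d)$ when $22-r=2\sigma$. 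Your identification of the boundary case and the sign bookkeeping as the crux is on target.
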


     Let $k$ be an algebraically closed field of characteristic$\neq 2$,
     $Y$ be an Enriques surface over $k$ and $f : X \to Y$ be the K3 covering of $Y$.
     The pull back $f^{*}$ gives a primitive embedding $\Gamma (2) \hookrightarrow NS(X)$. (\cite{Na}, p.206)
     In particular the Picard number of $X$, $\rho (X) \geq 10$.
     \begin{prop}\label{J1}
     Assume the characteristic of $k$ is $p>2$. If $X$ is of finite height, the height $h(X) \leq 6$.
     If the height of $X$ is infinite, $X$ is supersingular and the Artin invariant of $X$, $\sigma (X) \leq 5$.

     \end{prop}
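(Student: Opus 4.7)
The plan is to split according to whether $h(X)$ is finite or infinite, with the primitive embedding $\Gamma(2) \hookrightarrow NS(X)$ of rank $10$ (forcing $\rho(X) \geq 10$) as the central input.

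In the finite-height case, I would appeal directly to the bound recalled in Section 2: the Picard number of a K3 surface of finite height $h$ is at most the multiplicity of the slope $1$ part of $H^{2}_{cris}(X/K)$, which equals $22 - 2h$. Combined with $\rho(X) \geq 10$ this yields $h \leq 6$ immediately; this half of the statement is essentially automatic.

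The infinite-height case requires more work. First I would upgrade Artin-supersingularity to supersingularity: since $\rho(X) \geq 10 > 4$, the lattice $NS(X)$ carries an isotropic vector and hence $X$ admits an elliptic fibration, so by the Artin--Swinnerton-Dyer result recalled in Section 2, $X$ is supersingular and $NS(X) \simeq \Lambda_{p,\sigma}$. The task then becomes to rule out $\sigma \geq 6$.

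The heart of the argument, and the main obstacle, is to invoke Shimada's Proposition \ref{SH2} with $M = \Gamma(2)$, which has rank $r=10$, signature $(1,9)$, and discriminant $d(\Gamma(2)) = 2^{10}\, d(\Gamma) = -2^{10}$ (since $\Gamma \simeq U \oplus E_{8}$ is unimodular of signature $(1,9)$, so $d(\Gamma) = -1$). Since $p>2$, $p$ does not divide $2d$, so Proposition \ref{SH2} applies. Part (1) immediately excludes $\sigma \geq 7$, since then $r = 10 > 22 - 2\sigma$ and no primitive embedding can exist. The delicate borderline case is $\sigma = 6$, where $r = 22 - 2\sigma$ and part (3) says the embedding exists if and only if the Artin condition $\Arth(p, 6, -2^{10})$ holds. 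I would verify that
\[
\left(\frac{(-1)^{7}(-2^{10})}{p}\right) \;=\; \left(\frac{2^{10}}{p}\right) \;=\; \left(\frac{2}{p}\right)^{10} \;=\; 1,
\]
so this condition (which demands value $-1$) fails, and the embedding into $\Lambda_{p,6}$ cannot exist. Together with the existence of the Enriques pullback embedding $\Gamma(2) \hookrightarrow NS(X)$, this forces $\sigma \leq 5$.
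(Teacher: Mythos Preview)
Your proposal is correct and follows essentially the same route as the paper: use $\rho(X)\geq 10$ together with the slope-$1$ bound $\rho\leq 22-2h$ in the finite-height case, upgrade Artin-supersingularity to supersingularity via the elliptic fibration argument, and then apply Proposition~\ref{SH2}(1) to exclude $\sigma\geq 7$ and Proposition~\ref{SH2}(3) with the computation $(-1)^{7}(-2^{10})=2^{10}$ (a square mod $p$) to rule out $\sigma=6$. The only difference is presentational: the paper observes that $2^{10}$ is a perfect square rather than writing out $\bigl(\tfrac{2}{p}\bigr)^{10}=1$, but this is the same point.
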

     \begin{proof}
     Recall that the Picard number of a smooth algebraic surface is equal to or less than the dimension of the slope 1 part
     of $H^{2}_{cris}(X/K)$. When $X$ is of finite height $h$, the dimension of the slope 1 part of $H^{2}_{cris}(X/K)= 22-2h$, so
     $h \leq 6$. Assume the height of $X$ is infinite. Since the Picard number of $X$ is greater than 4, $X$ has an elliptic fibraiton,
     so $X$ is supersingular. Let $\sigma$ be the Artin invariant of $X$.
     Since the discriminant of $\Gamma (2)$ is $-2^{10}$, by proposition \ref{SH2} (1), $\sigma \leq 6$. When $\sigma =6$, $(-1)^{5} d(\Gamma (2)) = 2^{10}$ is a perfect square, so
     $\Arth (p, \sigma , d(\Gamma (2) ))$ fails. By proposition \ref{SH2} (3), there is no primitive embedding of $\Gamma$ into $\Lambda _{p, 6}$.
     \hspace{0.5cm}
     \end{proof}
     \begin{rem} \label{rem}
     The above proposition states that for an Enriques K3 surface, there are 11 possibilities of the Frobenius invariants.
     Let us consider the moduli space of Enriques surfaces (local or global) which is 10 dimensional and the K3 cover of the universal family. Since generally in a family of K3 surfaces,
     the one step degeneration
     of Frobenius invariants is of codimension 1, it is reasonable to expect that all the possible Frobenius invariants, in the above proposition,
     actually occur for an Enriques K3 surface.
     \end{rem}

     \section{Enriques involution of a supersingular K3 surface}
     Let $k$ be an algebraically closed field of characteristic $p>2$.
     \begin{thm} \label{th1}
     Assume $X$ is a superingular K3 surface defined over $k$. $X$ is an Enriques K3 surface if and only if
     there exists a primitive embedding of $\Gamma (2) = U (2) \oplus E_{8}(2)$ into $NS(X)$ such that the orthogonal complement
     of $\Gamma(2)$ in $NS(X)$ does not contain a vector of self intersection $-2$.
     \end{thm}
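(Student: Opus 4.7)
Suppose $X$ admits an Enriques involution $\iota$ with quotient $Y$. The pull-back $f^{\ast}$ along the \'etale double cover $f : X \to Y$ furnishes the primitive embedding $\Gamma(2) \hookrightarrow NS(X)$ with image $NS(X)^{\iota^{\ast}}$, so its orthogonal complement $T$ coincides with the $(-1)$-eigenspace of $\iota^{\ast}$ (rationally). If some $v \in T$ had $v^{2} = -2$, Riemann--Roch on $X$ would give an effective representative $D$ of $v$ or of $-v$; since $\iota^{\ast} v = -v$, the divisor $\iota^{\ast} D$ would also be effective, so $D + \iota^{\ast} D$ would be a nonzero effective divisor linearly equivalent to zero -- impossible.

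\textbf{Sufficiency.} Given a primitive embedding $j : \Gamma(2) \hookrightarrow NS(X)$ with orthogonal complement $T$ free of $(-2)$-vectors, I manufacture a lattice involution and apply the crystalline Torelli theorem (Theorem 2.4). Define $\phi_{0} := \mathrm{id}_{\Gamma(2)} \oplus (-\mathrm{id}_{T})$ on $\Gamma(2) \oplus T \subset NS(X)$. Because $\Gamma$ is unimodular, $l(\Gamma(2))$ is a 2-torsion $\F_{2}$-vector space; hence the glue subgroups $S_{1} \subset l(\Gamma(2))$ and $S_{2} = \gamma(S_{1}) \subset l(T)$ are 2-torsion, and since $-\mathrm{id} = \mathrm{id}$ on 2-torsion, Nikulin's compatibility (Proposition \ref{Ni2}) holds automatically and $\phi_{0}$ extends to an involution $\phi \in O(NS(X))$. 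Next, Shimada's Proposition \ref{SH1} (applicable since $\gcd(2^{10}, p^{2\sigma}) = 1$) identifies the $p$-primary part of $l(T)$ with $l(NS(X))$; translating this to the $\Z_{p}$-decomposition $NS(X) \otimes \Z_{p} = E_{0} \oplus E_{1}$ forces the $p$-modular block $E_{0} \subset T \otimes \Z_{p}$. In particular, the period space $K_{X} \subset E_{0} \otimes k$ lies inside $T \otimes k$, where $\phi$ acts as $-\mathrm{id}$, so $\phi(K_{X}) = K_{X}$ automatically.

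It remains to arrange that $\phi$ sends the ample cone of $X$ to itself, which I do by composing with a product of Weyl reflections in $(-2)$-classes of $NS(X)$. The no-$(-2)$ hypothesis on $T$ is precisely what lets this correction proceed while retaining the $\pm 1$-eigenspace structure (up to lattice conjugacy), since every available reflection class projects non-trivially onto $\Gamma(2)$. Theorem 2.4 then promotes the corrected $\phi$ to an automorphism $\iota \in \Aut(X)$ with $\iota^{\ast} = \phi$; since $\phi^{2} = 1$ and the action of $\Aut(X)$ on $NS(X)$ is faithful for supersingular K3 surfaces, $\iota$ is an involution. Moreover $\iota$ is non-symplectic: $K_{X}$ lies in the $(-1)$-eigenspace of $\phi$ and surjects onto $H^{2}(X, \mathcal{O}_{X})$ via Proposition \ref{sec2}, so $\iota^{\ast}$ acts as $-1$ there.

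The fixed locus $X^{\iota}$ is therefore a disjoint union of smooth curves, and since every class in $NS(X)^{\iota} = j(\Gamma(2))$ has self-intersection in $4\Z$ (as $\Gamma$ is even unimodular and the form of $\Gamma(2)$ is doubled), no fixed curve can be rational. Combining this with the Lefschetz count on $\ell$-adic cohomology
$$\chi(X^{\iota}) = 2 + \mathrm{tr}\bigl(\iota^{\ast} \mid H^{2}_{\mathrm{et}}(X, \Q_{\ell})\bigr) = 2 + (10 - 12) = 0$$
and the classification of non-symplectic K3 involutions -- which identifies the lattice data ``$L^{+} \simeq \Gamma(2)$, $L^{-}$ free of $(-2)$-vectors'' as the fixed-point-free case -- forces $X^{\iota} = \emptyset$. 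Hence $X/\iota$ is smooth and $X \to X/\iota$ is \'etale, so $X/\iota$ is an Enriques surface. The main \emph{obstacle} is the ample-cone correction: one must verify that the Weyl modification needed to send the ample cone to itself still yields an involution realizable via crystalline Torelli, and the no-$(-2)$ hypothesis on $T$ is precisely the input that makes this possible.
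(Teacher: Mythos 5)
Your necessity argument and the first two-thirds of your sufficiency argument track the paper's proof closely: the extension of $\mathrm{id}\oplus(-\mathrm{id})$ via Nikulin's criterion using that the glue group is $2$-torsion (a consequence of Proposition \ref{SH1} and $\gcd(2^{10},p^{2\sigma})=1$), and the observation that the period space $K_X$ sits inside $T\otimes k$ where the involution acts by $-1$, are exactly the paper's Lemmas 4.2 and 4.3. Two points, however, need attention. First, the ample-cone correction: you propose to \emph{compose} $\phi$ with Weyl reflections and assert that this "retains the $\pm1$-eigenspace structure up to lattice conjugacy," but composition with $w\in W_X$ generally changes the invariant lattice, and your later steps (the $4\Z$ self-intersection argument, the trace computation) all depend on the invariant lattice still being $\Gamma(2)$. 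The paper instead \emph{conjugates}: since $T$ contains no roots, no root is orthogonal to all of $\Gamma(2)$, so $\Gamma(2)$ meets $P_1$; choosing $\gamma\in W_X$ with $\gamma(v)$ ample for such a $v$, the conjugate $\gamma\psi\gamma^{-1}$ fixes an ample class and manifestly has invariant lattice $\gamma(\Gamma(2))\cong\Gamma(2)$. Your version can be repaired, but as written the key property you need downstream is not preserved.

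The genuine gap is in the fixed-point-freeness step. Your Lefschetz count gives $\chi(X^{\iota})=0$, and your $4\Z$-divisibility argument rules out fixed rational curves, but a disjoint union of fixed \emph{elliptic} curves also has $\chi=0$, so the computation does not conclude. You then fall back on "the classification of non-symplectic K3 involutions," i.e. Nikulin's identification of the invariants $(r,a,\delta)=(10,10,0)$ with the fixed-point-free case — but that is a characteristic-zero theorem proved via the Hodge-theoretic Torelli theorem and is not available off the shelf for supersingular surfaces in characteristic $p$; invoking it here leaves the main point unproved. The paper's argument is different and self-contained: if the fixed locus were nonempty it would be a smooth divisor (since $g^*$ acts by $-1$ on $H^0(X,\Omega^2_{X})$ via Proposition \ref{sec2} and Serre duality), so $Y=X/g$ would be a smooth surface, minimal because a $(-1)$-curve on $Y$ would pull back to a $(-2)$-vector in $\Gamma(2)$; then $f^*(K_Y+C)=K_X\equiv 0$ forces $-K_Y$ effective, so $Y$ would be rational with $b_2\le 2$, contradicting $H^2_{\acute{e}t}(Y,\Q_\ell)\supseteq \Gamma(2)\otimes\Q_\ell$ of dimension $10$. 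You should replace the appeal to Nikulin's classification with an argument of this kind (or supply a proof of the classification in your setting).
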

     \begin{proof}
     Assume $X$ is the K3 cover of an Enriques surface $Y$ and $f: X \to Y$ is the covering map. Let
     $i : X \to X$ be the associated involution. $N=f^{*} NS(Y) \simeq \Gamma (2)$ is a primitive sublattice of $NS(X)$.
     Let $M$ be the orthogonal complement of $N$ in $NS(X)$.
     $M$ is negative definite.
     $N$ and $M$ are the eigenspace of $i ^{*}$ in $NS(X)$ for eigenvalues 1 and -1 respectively.
     Assume there is a vector $v \in M$ such that $(v,v)=-2$. Then by the Riemann-Roch theorem, either of $v$ or $-v$ is effective.
     Suppose $v$ is effective and $-v$ is not. But $i^{*}(v)=-v$ and it contradicts. Therefore $M$ does not contain
     a vector of self intersection -2.

     Conversely we assume there exists a primitive embedding $N \simeq \Gamma (2) \hookrightarrow NS(X)$ and the orthogonal complement of $N$
     in $NS(X)$, $M$ does not contain a vector of self intersection -2. Let $\psi = id \oplus -id : N \oplus M \to N \oplus M$.
     \begin{lem} $\psi$ extends to $NS(X)$.

     \end{lem}
     \begin{proof}
     Let $S = NS(X) /(N \oplus M)$ and $S_{1}$ and $S_{2}$ be the image of $S$ in $l(N)$ and $l(M)$ by the projections.
     $\gamma : S_{1} \to S_{2}$ is the isomorphism compatible with the projections.
     $\sigma$ is the Artin invariant of $X$, $d(NS(X)) = -p^{2\sigma}$ and
     $d(N)= -2^{10}$. Hence by Proposition \ref{SH1},
     \begin{center}
     $l(M) = l(N) \oplus l(NS(X)) = (\Z /2)^{10} \oplus (\Z /p) ^{2\sigma}$ and $S \simeq (\Z /2)^{10}$.
     \end{center}
     $id$ and $-id$ preserve $S_{1}$ and $S_{2}$ respectively and $\gamma \circ id = -id \circ \gamma$, so by proposition \ref{Ni2}, $\psi$ extends
     to $NS(X)$.
     \end{proof}
     We denote the extension of $\psi$ to $NS(X)$ by $\psi$ again.
     $V = NS(X) \otimes \R$ and
     $$P = \{ x \in V | (x,x)>0 \}$$ is the positive cone of $X$. $\Delta = \{ v \in NS(X) | (v,v)=-2 \}$,
     the set of roots of $X$. For any $v \in \Delta$, we denote the reflection with respect to
     $v$ by $s_{v} : u \mapsto u+ (u,v)v$. Let us denote the subgroup of the orthogonal group of $NS(X)$ generated by all the refections $s_{v}, ( v \in \Delta)$ and $-id$ by $W_{X}$.
     We set $P_{1} = \{ v \in P | (v, w) \neq 0,\ \forall w \in \Delta \}$. It is known that the $W_{X}$ acts simply transitively on the set of connected components of $P_{1}$. Moreover
     we can prove that the connected component which contains an ample divisor is the ample cone of $X$. (\cite{Og2}, \cite{Na}) In other words, $v \in NS(X)$ represents an ample divisor
     if and only if $(v,v)>0$ and $(v, w)>0$ for all $w \in \Delta$. Equivalently, if $(v,v)>0$ and $(v,w)\neq 0$ for any $w \in \Delta$, there exists a unique element $\gamma$ in $W_{X}$ such that
     $\gamma (v)$ represents an ample divisor.
     \begin{lem}
     There exists $\gamma \in W_{X}$ such that
     $\gamma \circ \psi \circ \gamma ^{-1}$ is induced by an automorphism of $X$
     \end{lem}
     \begin{proof}
     Since the signature of $N$ is $(1,9)$ and $M \cap \Delta = \varnothing$, there exists $v \in N \cap P_{1}$ such that $(v,v)>0$. By the above argument there exists
     $\gamma \in W_{X}$ such that $\gamma (v)$ is an ample class. We may assume $N$ itself contains an ample class.
     Because $\psi$ fixes an ample class, it preserves the ample cone. By the crystalline Torelli theorem, it is enough to
     show that $\psi$ preserves the period space of $X$,
     $$K_{X} = \ker (NS(X) \otimes k \to H^{2}_{cris}(X/W) \otimes k = H^{2}_{dr}(X/k)).$$
     Since $d(N) = -2^{10}$, $N \otimes W$ is unimodular in $H^{2}_{cris}(X/W)$, so $H^{2}_{cris}(X/W)$ has a decomposition
     $$H^{2}_{cris}(X/W) = (N \otimes W) \oplus M',$$
     where $M'$ is a unimodular $W$-overlattice of $M \otimes W$. We have
     $$K_{X} = \ker (M \otimes k \to H^{2}_{cris}(X/W) \otimes k).$$
     Therefore $\psi$ induces a multiplication by $-1$ on $K_{X}$ and $K_{X}$ is preserved by $\psi$.
     \end{proof}
     We assume $\gamma$ is identity and $N$ contains an ample class. Let $g : X \to X$ be the automorphism
     of $X$ which induces $\psi$ on $NS(X)$. $g$ is an involution since so is $\psi$. Now we show $g$ has no fixed point.
     We will follow the argument in \cite{Na}.
     By the above decomposition $H^{2}_{cris}(X/W) = (N \otimes W) \oplus M'$, we obtain
     $$ M'/ (M \otimes W) \simeq H^{2}_{cris} (X/W) / (NS(X) \otimes W).$$
     Hence $g$ induces $-1$ on $H^{2}_{cris} (X/W) / (NS(X) \otimes W)$ and -1 on $H^{2}(X, \mathcal{O}_{X})$ by proposition \ref{sec2}. The Serre duality implies
     $g$ induces $-1$ on $H^{0}(X, \Omega ^{1}_{X/k})$ and also on $\Omega ^{1}_{X/k}$. Assume $x \in X$ is a fixed point of $g$. Because $g$ acts on $\Omega _{X/k} ^{1}$ by -1,
     there exists an \'{e}tale neighborhood of $x$, $\Spec k[t_{1}, t_{2}]$ such that $g^{*}(t_{1}) = t_{1}$, $g^{*}(t_{2}) = -t_{2}$. Therefore
     the fixed locus of $g$ is a divisor of $X$ and $Y=X/g$ is a smooth surface. Let $f : X \to Y$ be the projection.
     Since $N \cong E(2) \oplus E_{8} (2)$ does not contain a vector with self intersection $-2$, $Y$ is a minimal surface. Let $D$ be the fixed locus of $g$ and $C= f(D)$. $C$ is the ramification locus of $f$. It follows that $f^{*}(K_{Y} + C) = K_{X} \equiv 0$ and $-K_{Y}$ is effective, so $Y$ is a rational surface. However
     $H^{2}_{\acute{e}t}(Y, \Q _{l}) = H^{2}_{\acute{e}t} (X, \Q _{l}) ^{g} = N \otimes \Q _{l}$ is of 10 dimensional. It contradicts since
     $\dim H^{2} _{\acute{e}t} (Y, \Q _{l})$ is 2 or 3. Therefore $g$ does not have a fixed point.
     $X$ is an \'{e}tale double cover of $Y$, so $Y$ is an Enriques surface and $X$ has an Enriques involution. \hspace{0.5cm}
     \end{proof}

     \begin{rem}
     Theorem \ref{th1} is only concerned with the lattice structure of $NS(X)$. The Neron-Severi lattice of a supersingular
     K3 surface is determined by the characteristic of the base field and the Artin invariant.
     Assume $k$ and $k'$ be algebraically closed fields of same characteristic $p>2$,
     if one supersingular K3-surface of Artin invariant $\sigma$ over $k$ has an Enriques involution, every supersingular K3-surface of
     Artin invariant $\sigma$ over $k'$ has an Enriques involution.
    \end{rem}

     $k$ is an algebraically closed filed of characteristic $p>2$.
     Since the unique supersingular K3 surface of Artin invariant 1 over $k$ is the Kummer surface of the product of two supersingular elliptic curves, 
     it has an Enriques involution. (cf. \cite{MN}, p.383) In the following we prove a supersingular K3 surface has an Enriques involution if and only if the Artin invariant
     is less than 6 over almost all characteristics.

     \begin{thm}\label{th2}
     Let $k$ be an algebraically closed field of characteristic $p$ and $X$ be a supersingular K3 surface of Artin invariant $\sigma$.
     \begin{description}
     \item[(1)] If $p=11$ or $p\geq 19$ and $\sigma$= 3 or 5, $X$ has an Enriques involution
     \item[(2)] If $p\geq13$, $p\neq23$ and $\sigma$=2 or 4, $X$ has an Enriques involution.
     \end{description}

     \end{thm}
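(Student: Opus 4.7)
The plan is to apply Theorem \ref{th1} case by case. By that theorem, for a supersingular K3 surface $X$ of Artin invariant $\sigma$ over characteristic $p$, $X$ admits an Enriques involution if and only if there is a primitive embedding $N := \Gamma(2) = U(2) \oplus E_{8}(2) \hookrightarrow NS(X) = \Lambda_{p,\sigma}$ whose orthogonal complement $M$ contains no vector of self-intersection $-2$.

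The first step is to pin down the genus of $M$. Since $d(N) = \pm 2^{10}$ and $d(\Lambda_{p,\sigma}) = -p^{2\sigma}$ are coprime, Proposition \ref{SH1} yields
$$ l(M) \cong l(N) \oplus l(\Lambda_{p,\sigma}) \cong (\Z/2)^{10} \oplus (\Z/p)^{2\sigma}, $$
with the discriminant quadratic form completely determined. The signature constraint coming from $N \subset \Lambda_{p,\sigma}$ forces $M$ to be an even negative-definite lattice of rank $12$. Moreover, Proposition \ref{SH2}(2) (applied with $\rank N = 10 < 22-2\sigma$ for $\sigma \leq 5$) shows that this genus is non-empty and that at least one lattice in it glues back to $\Lambda_{p,\sigma}$ along the natural projection to $l(N)$, so the only question is whether a root-free representative exists.

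The heart of the proof is then to exhibit, for each admissible pair $(p,\sigma)$, a lattice $M$ in this genus with no roots. A natural ansatz is to take $M$ as an orthogonal sum $M_{2} \oplus M_{p}$, where $M_{2}$ is a $2$-elementary negative-definite lattice carrying the $(\Z/2)^{10}$ part of $q_{M}$ and $M_{p}$ is a $p$-elementary lattice of rank $2\sigma$ and discriminant $p^{2\sigma}$ chosen to have minimum strictly less than $-2$ so as to contribute no root. For $\sigma = 4$ one can take $M_{p} = E_{8}(p)$; for $\sigma = 2, 3, 5$ the $p$-elementary summand is a suitable $p$-scaling of a positive-definite lattice of rank $4$, $6$, or $10$, whose existence (with the correct discriminant form matching $q_{\Lambda_{p,\sigma}}$) is controlled by whether certain quadratic forms over $\F_{p}$ represent the required values; this is where the parity of $\sigma$ enters. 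The $2$-elementary summand $M_{2}$, of rank $12 - 2\sigma$, is chosen from Nikulin's classification of $2$-elementary even negative-definite lattices so as to carry $-q_{N}$ and to have no roots.

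The main obstacle is verifying that a root-free $M$ with the prescribed discriminant form actually exists precisely under the stated arithmetic hypotheses on $p$. The excluded small primes correspond to cases in which every lattice in the genus must contain a vector of self-intersection $-2$, either because the Legendre-symbol condition needed to build the $p$-elementary summand $M_{p}$ with minimum $> 2p$ (after rescaling) fails, or because the gluing forces a root into the $2$-elementary part. For the allowed $(p,\sigma)$, the explicit $M_{2} \oplus M_{p}$ is produced directly and its short vectors are controlled: any $(-2)$-vector must project trivially to $M_{p}$ (by the minimum bound) and hence lie in $M_{2}$, and $M_{2}$ is chosen root-free by Nikulin's classification. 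Once such an $M$ is produced, the gluing criterion of Proposition \ref{Ni2} assembles $N \oplus M$ into a primitive embedding $\Gamma(2) \hookrightarrow \Lambda_{p,\sigma}$ with orthogonal complement $M$, and Theorem \ref{th1} delivers the Enriques involution.
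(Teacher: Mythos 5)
There is a genuine gap, and it is fatal to the central construction. You propose to realize the orthogonal complement $M$ of $\Gamma(2)$ in $\Lambda_{p,\sigma}$ as an orthogonal direct sum $M_{2}\oplus M_{p}$, with $M_{p}$ a $p$-elementary lattice of rank $2\sigma$ carrying the $(\Z/p)^{2\sigma}$ part of the discriminant group and $M_{2}$ a $2$-elementary lattice of rank $12-2\sigma$ carrying the $(\Z/2)^{10}$ part. But the minimal number of generators of $l(L)$ is at most $\rank L$ for any lattice $L$, so a lattice with discriminant group $(\Z/2)^{10}$ must have rank at least $10$, whereas your $M_{2}$ has rank $12-2\sigma\leq 8$ for every $\sigma\geq 2$ covered by the theorem. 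No such splitting exists: the $2$-part of $q_{M}$ necessarily spreads across a sublattice of rank at least $10$, which forces a nontrivial gluing (an overlattice of a direct sum whose summands each carry part of the $2$-torsion), not an orthogonal decomposition into a $2$-elementary and a $p$-elementary piece. Since the entire root-freeness argument (``any $(-2)$-vector projects trivially to $M_{p}$, hence lies in the root-free $M_{2}$'') rests on this splitting, the proof does not go through. A secondary problem is that the existence claims for the two summands with matching discriminant forms --- ``the heart of the proof'' in your own words --- are never carried out, and your assertion that the excluded small primes are those for which \emph{every} lattice in the genus contains a root is unsupported and at odds with the paper's Remark 4.9, which expects the result to hold for all odd $p$; the exclusions are artifacts of the method, not intrinsic obstructions.

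For contrast, the paper takes a different and workable route: it writes down explicit root-free negative-definite lattices $N_{\sigma,d}$ of rank $12-2\sigma$ inside $U(2)\oplus E_{8}(2)$, realizes a complex K3 surface $X_{\sigma,d}$ whose N\'eron--Severi lattice is an overlattice of $\Gamma(2)\oplus N_{\sigma,d}$ of rank $22-2\sigma$ and discriminant $-4^{a}d$, and then invokes Shimada's criterion (Proposition \ref{SH2}(3)) to embed $NS(X_{\sigma,d})$ primitively into $\Lambda_{p,\sigma}$ --- this is exactly where the Legendre-symbol condition on $-d$, and hence the hypotheses on $p$ and the parity of $\sigma$, enter. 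The complement $L$ of $\Gamma(2)$ is then an overlattice of $K\oplus N_{\sigma,d}$ with the form on $K$ divisible by $p$, and the inequality $d<p/8$ yields $(v,v)<-2$ for any $v\in L$ not lying in $N_{\sigma,d}$. If you want to salvage a purely lattice-theoretic construction of $M$, you would need to build it as a glued overlattice in the genus determined by $q_{M}=-q_{\Gamma(2)}\oplus q_{\Lambda_{p,\sigma}}$ rather than as an orthogonal sum, and then still prove root-freeness by a norm-divisibility argument of the kind the paper uses.
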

     \begin{proof}
     For any primitive embedding $\Gamma (2) \hookrightarrow \Lambda$, the orthogonal complement is isomorphic to $U \oplus U(2) \oplus E_{8}(2)$. (\cite{H}, p.81)
     Let $\mathbf{x,y}$ be a standard basis of $U(2)$ such that the matrix corresponding to the basis is
     $$\left( \begin{array}{cc}
     0 &2\\ 2& 0
     \end{array} \right).$$
     We also fix a basis of $E_{8}(2)$, $\mathbf{e}_{1}, \cdots , \mathbf{e}_{8}$ for which the corresponding matrix of the lattice is
     $$\left(
     \begin{array}{rrrrrrrr}
     -4&2&0&0&0&0&0&0\\
     2&-4&2&0&0&0&0&0\\
     0&2&-4&2&2&0&0&0\\
     0&0&2&-4&0&0&0&0\\
     0&0&2&0&-4&2&0&0\\
     0&0&0&0&2&-4&2&0\\
     0&0&0&0&0&2&-4&2\\
     0&0&0&0&0&0&2&-4
     \end{array} \right).
      $$
     For a positive integer $d$, we put
     \begin{center}
     $M_{2,d} =
     \left(
     \begin{array}{rr}
     4d &0 \\ 0&-4 \end{array} \right)$,
     $N_{5,d} =
     \left(
     \begin{array}{rr}
     -4d &0 \\ 0&-4 \end{array} \right)$,
     \end{center}
     \begin{center}
     $M_{3,d} =  \left(
     \begin{array}{rrrr}
     4d &0&0&0\\ 0&-4&0&0\\ 0&0&-4&0 \\ 0&0&0&-4
     \end{array} \right)$ and
     $N_{4,d} =  \left(
     \begin{array}{rrrr}
     -4d &0&0&0\\ 0&-4&0&0\\ 0&0&-4&0 \\ 0&0&0&-4
     \end{array} \right)$

     \end{center}
     We denote a basis of $M_{2,d}$ and $N_{5,d}$ for the given matrix presentation by $\bf{f}_{1}, \bf{f}_{2}$.
     We also denote a basis of $M_{3,d}$ and $N_{4,d}$ for the given matrix presentation by $\bf{f} _{1}, \bf{f}_{2}, \bf{f}_{3}, \bf{f}_{4}$.
     We give a primitive embedding of each lattice into $U(2) \oplus E_{8}(2)$ as follow.
     \begin{center}
     $\nu : M_{2,d} \hookrightarrow U(2) \oplus E_{8}(2)$, $\nu (\bf{f} _{1}) = \bf{x} + d\bf{y}, \nu(\bf{f} _{2}) = \bf{e} _{1}$\\[0.1cm]
     \end{center}
     \begin{center}
     $\nu : N_{5,d} \hookrightarrow U(2) \oplus E_{8}(2)$, $\nu (\bf{f} _{1}) = \bf{x} - d\bf{y}, \nu(\bf{f} _{2}) = \bf{e} _{1}$
     \end{center}
      \begin{center}
     $\nu : M_{3,d} \hookrightarrow U(2) \oplus E_{8}(2)$, $\nu (\bf{f} _{1}) = \bf{x} + d\bf{y}, \nu(\bf{f} _{2}) = \bf{e} _{1}, \nu(\bf{f}_{3}) = \bf{e} _{3}, \nu(\bf{f}_{4})= \bf{e}_{6}$
     \end{center}
      \begin{center}
    $\nu : N_{4,d} \hookrightarrow U(2) \oplus E_{8}(2)$, $\nu (\bf{f} _{1}) = \bf{x} - d\bf{y}, \nu(\bf{f} _{2}) = \bf{e} _{1}, \nu(\bf{f}_{3}) = \bf{e} _{3}, \nu(\bf{f}_{4})= \bf{e}_{6}$

     \end{center}
     We denote the orthogonal complements of $M_{2,d}$ and $M_{3,d}$, $N_{4,d}$ and $N_{5,d}$ in $U(2) \oplus E_{8}(2)$ by $N_{2,d}$, $N_{3,d}$, $M_{4,d}$ and $M_{5,d}$ respectively.
     For each $\sigma$ and $d$, $N _{\sigma , d}$ is negative definite of rank $12- 2\sigma$ and has no vector
     of self intersection -2. We can check that in each case, the discriminant group of $N_{\sigma , d}$ is
     $$l(N_{2 ,d}) = (\Z / 4d) \oplus (\Z/4) \oplus (\Z/2) ^{6},$$
     $$l(N_{3 ,d}) = (\Z / 4d) \oplus (\Z/4)^{3} \oplus (\Z/2) ^{2},$$
     $$l(N_{4 ,d}) = (\Z / 4d) \oplus (\Z/4)^{3}\textrm{ or}$$
     $$l(N_{5 ,d}) = (\Z / 4d) \oplus (\Z/4).$$
     $U \oplus M_{\sigma , d}$ is an even lattice of signature $(2,2\sigma -2)$. Hence an embedding $U \oplus M_{\sigma , d} \hookrightarrow \Lambda$ is unique up to
     isometry and there exist a K3 surface $X_{\sigma , d}$ over $\C$ such that the transcendental lattice of $X _{\sigma , d}$, $T(X_{\sigma , d})$ is isomorphic
     to $U \oplus M_{\sigma , d}$. (\cite{Mo}, p.112) In this case, the Neron-Severi group of $X_{\sigma , d}$ is an over lattice of $N_{\sigma , d} \oplus \Gamma (2)$.
     For the discriminant of the transcendental lattice of $X_{\sigma , d}$, we can check
     $$d(T(X_{2,d})) = 4^{2}d, \ d(T(X_{3,d})) = 4^{4}d,$$
     $$d(T(X_{4,d})) = d(T(X_{5,d})) = 4^{5}d.$$
     Hence we have
      $$d(NS(X_{2,d})) = -4^{2}d, \ d(NS(X_{3,d})) = -4^{4}d,$$
     $$d(NS(X_{4,d})) = d(NS(X_{5,d})) = -4^{5}d.$$
     \begin{lem}
     If $p=11$ or $p\geq 19$, there exists a positive integer $d < p/8$ such that $-d$ is a non-square modulo $p$.
     If $p \geq 13$ and $p\neq 23$, there exists a positive integer $d <p/8$ such that $-d$ is a square modulo $p$.
     \end{lem}
     \begin{proof}
     It is straightforward.
     \end{proof}
     From now on we assume $0<d < p/8$ and $-d$ is a square modulo $p$ if $\sigma =2$ or $4$ and $-d$ is a non-square modulo $p$ if $\sigma =3$ or 5.
     Because $NS(X _{\sigma ,d}) = -4^{a} d$ for some positive integer $a$ and the rank of $NS(X_{\sigma, d})$ is $22-2\sigma$, by proposition \ref{SH2}, there exists a primitive embedding
     $$i : NS(X_{\sigma , d}) \hookrightarrow \Lambda _{p,\sigma}. $$
     Let $K$ be the orthogonal complement of $i$. Put $ j : \Gamma (2) \hookrightarrow NS(X_{\sigma , d})$ is the given embedding.
     Let $L$ be the orthogonal complement of
     $$ i \circ j : \Gamma (2) \hookrightarrow \Lambda _{p, \sigma }.$$
     Since the discriminant of $NS(X_{\sigma , d})$ is coprime to the discriminant of $\Lambda _{p, \sigma}$, by proposition \ref{SH1},
     $$l(K) = (\Z /p) ^{2\sigma} \oplus l (NS(X_{\sigma , d})).$$
     In particular, the intersection form on $K$ is divisible by $p$. $L$ is an over lattice of $K \oplus N _{\sigma , d}$. Two embeddings
     $K \hookrightarrow L$ and $N_{\sigma , d} \hookrightarrow L$ are primitive.
     By theorem \ref{th1}, it is enough to show that $L$ has no vector of self intersection -2.
     The finite group $L / (K \oplus N_{\sigma , d})$ is a subgroup of $l(N_{\sigma , d})$. Hence for $v \in L$, $4dv \in K \oplus N_{\sigma , d}$.
     Put $4dv = u+w$ where $u \in K$ and $w \in N_{\sigma , d}$. Because $N_{\sigma , d}$ does not have a vector of self intersection -2, we may assume
     $u \neq 0$.
      Since
      $$(u, u) = (4dv,u)=4d(v,u),$$
      $4d$ divides $(u,u)$. Since the intersection form on $K$
     is divisible by $p$, $4dp$ divides $(u,u)$. Therefore we have
     $$(v,v) \leq \frac{1}{(4d)^{2}} (u,u) <-2$$
     and the proof is completed.
          \end{proof}
     \begin{cor}
     Let $k$ be an algebraically closed field of characteristic $p=19$ or $p>23$ and $X$ be a supersingular K3 surface over $k$. $X$ is an Enriques K3 surface
     if and only if the Artin invariant of $X$ is less than 6.
     \end{cor}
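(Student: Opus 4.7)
The plan is to assemble this as a direct corollary of the already-established ingredients: Proposition \ref{J1} handles the forward implication, while Theorem \ref{th2} together with the Kummer surface case handles the reverse. Essentially no new argument is required; the work is just verifying that the numerical hypotheses on $p$ in Theorem \ref{th2} are satisfied under the standing assumption $p = 19$ or $p > 23$, and supplying $\sigma = 1$ by hand.

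For the ``only if'' direction, suppose $X$ is an Enriques K3 surface. Since $X$ is assumed supersingular, its height is infinite, so the second clause of Proposition \ref{J1} applies and forces the Artin invariant to satisfy $\sigma(X) \leq 5$, i.e.\ $\sigma(X) < 6$. This step is immediate and needs nothing further.

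For the ``if'' direction, I would split on $\sigma \in \{1,2,3,4,5\}$. When $\sigma = 1$, the unique (up to isomorphism) supersingular K3 surface of Artin invariant $1$ over $k$ is the Kummer surface of $E_1 \times E_2$ for two supersingular elliptic curves, and it is classical that such a Kummer surface carries a fixed-point-free involution (this is precisely the remark made in the text following Theorem \ref{th1}). For $\sigma \in \{3,5\}$, I would invoke Theorem \ref{th2}(1), whose hypothesis ``$p=11$ or $p \geq 19$'' is satisfied because our $p$ is either $19$ or strictly greater than $23$, and in particular $\geq 19$. For $\sigma \in \{2,4\}$, I would invoke Theorem \ref{th2}(2), whose hypothesis is ``$p \geq 13$ and $p \neq 23$''; both conditions again hold since $p = 19$ or $p > 23$. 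In each case Theorem \ref{th2} produces the required Enriques involution.

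Since Proposition \ref{J1} and Theorem \ref{th2} do all the nontrivial work, the ``obstacle'' in this corollary is purely bookkeeping: one must confirm that the arithmetic hypotheses of Theorem \ref{th2} simultaneously cover $\sigma = 2,3,4,5$ under the single assumption $p = 19$ or $p > 23$. The reason the excluded primes $p \in \{3,5,7,13,17,23\}$ are dropped from the corollary is exactly that Theorem \ref{th2} fails for some $\sigma$ at those primes (for instance $p = 23$ is excluded from case (2), and small primes are excluded from case (1)), so listing $p = 19$ and $p > 23$ is the cleanest range on which both cases apply to all of $\sigma = 2,3,4,5$ at once. No further computation is needed.
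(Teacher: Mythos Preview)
Your proof is correct and matches the paper's own (one-line) argument, which simply cites Proposition~\ref{J1} and Theorem~\ref{th2}; you have merely unpacked the case split and made the $\sigma=1$ Kummer case explicit (the paper states this fact just before Theorem~\ref{th2} rather than inside the proof). One trivial slip in your explanatory aside: the list of excluded odd primes should also contain $p=11$, since Theorem~\ref{th2}(2) requires $p\geq 13$.
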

     \begin{proof}
     It follows from proposition \ref{J1} and theorem \ref{th2}.
     \end{proof}
     The following result is an analogy for the supersingular case of the main theorem of \cite{Ke}.
     \begin{cor}
     Let $k$ be an algebraically closed field of characteristic $p=19$ or $p>23$ and $X$ be a supersingular Kummer surface over $k$. Then $X$ is an Enriques K3 surface.
     \end{cor}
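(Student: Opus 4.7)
The plan is to reduce the statement to Corollary 4.7 by showing that a supersingular Kummer surface automatically has Artin invariant at most $5$ (in fact much smaller). Since Corollary 4.7 characterizes Enriques K3 surfaces among supersingular K3 surfaces in characteristic $p = 19$ or $p > 23$ as exactly those with $\sigma < 6$, it suffices to bound the Artin invariant of $X = \mathrm{Km}(A)$ for $A$ a supersingular abelian surface.

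First I would recall the structure of supersingular Kummer surfaces: if $X = \mathrm{Km}(A)$ is supersingular as a K3 surface, then $A$ must be a supersingular abelian surface, and by Oort's classification such an $A$ is isogenous to $E \times E$ for a supersingular elliptic curve $E$. The Kummer construction then realizes $X$ via a rational double cover by $A$, and the F-crystal $H^{2}_{cris}(X/W)$ is, up to the 16 classes of exceptional curves, controlled by $H^{2}_{cris}(A/W) = \wedge^{2} H^{1}_{cris}(A/W)$. A direct computation of the discriminant of $NS(\mathrm{Km}(A))$ in terms of the discriminant of $NS(A)$ shows $\sigma(\mathrm{Km}(A)) \le 2$; this is the content of Ogus's computation (and appears, for instance, in Shioda's work on supersingular Kummer surfaces). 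In particular, $\sigma(X) \le 2 < 6$.

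Once this bound is in hand, the conclusion is immediate: Corollary 4.7 says that for $p = 19$ or $p > 23$, every supersingular K3 surface with Artin invariant strictly less than $6$ is an Enriques K3 surface, and applying it to $X$ finishes the proof.

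The only real step that is not formal is justifying the bound $\sigma(\mathrm{Km}(A)) \le 2$; the natural way to present it would be either to cite the classical result or to give a short argument via the comparison of Neron-Severi lattices $NS(A) \hookrightarrow NS(\mathrm{Km}(A))$ together with the sixteen exceptional $(-2)$-curves, using that $NS(A)$ has rank $6$ and discriminant a controlled power of $p$ when $A$ is supersingular. This is the only potentially delicate point, and everything else is a direct invocation of Corollary 4.7.
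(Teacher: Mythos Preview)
Your proposal is correct and follows essentially the same route as the paper: the paper's proof simply cites the known fact that a supersingular K3 surface over a field of odd characteristic is a Kummer surface if and only if its Artin invariant is $1$ or $2$, and then applies the preceding corollary (Corollary~4.7). Your argument is a slightly more detailed version of the same reduction, sketching why $\sigma(\mathrm{Km}(A))\le 2$ rather than just citing it.
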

     \begin{proof}
     A supersingular K3 surface over a field of odd characteristic is a Kummer surface if and only if the Artin invariant is 1 or 2.
     \end{proof}
     \begin{rem}
     In theorem \ref{th2}, we miss only finitely many cases. We could not find an effective way to apply theorem \ref{th1}
     to the cases of small characteristic. However, since the supersingular K3 surface of Artin invariant 1 has an Enriques involution
     over any odd characteristic, considering remark \ref{rem}, it is expected that theorem \ref{th2} holds over any odd characteristic.
     \end{rem}

\vskip 1cm

\noindent
J.Jang\\
Department of Mathematics\\
University of Ulsan \\
Daehakro 93, Namgu Ulsan 680-749, Korea\\ \\
jmjang@ulsan.ac.kr


\begin{thebibliography}{99}
     \bibitem{A} Artin,M. Supersingular K3 surfaces, Ann. Sci. \'{E}cole Norm. Sup. (4) 7, 1974, 545--567.
     \bibitem{AM} Artin,M. and Mazur,B. Formal groups arising from algebraic varieties, Ann. Sci. \'{E}cole Norm. Sup(4) 10, 1977, 87--131
     \bibitem{AS} Artin,M. and Swinnerton-Dyer,H.P.F. The Shafarecivh-Tate conjecture for pencils of elliptic curves on K4 surfaces,
     Invent. Math. 20, 1973, 249--266.
     \bibitem{BO} Berthelot,P. and Ogus,A. A note on the crystalline cohomology
      \bibitem{C} Francois,C. The Tate conjecture for K3 surfaces over finite fields, matharxiv:1206.4002.
     \bibitem{CD} Cossec,F.R. and Dogachev,I.V. Enriques Surfaces I, Progress in Mathematics 76, 1989.
     \bibitem{H} Horikawa,E. On the periods of Enriques surfaces I,II, Math.Ann 234,1978,73-88, Math.Ann 235, 1978, 217--246.
     \bibitem{I1} Illusie,L. Complexe de de Rham-Witt et
     cohomologie cristalline,Ann.ENS 4serie 12,1979,501--661.
     \bibitem{Ka} Katz,N. Slope filtraiton of F-crystal, Ast\'{e}risque 63, 1979, 113--163.
     \bibitem{Ke} Keum,J. Every algebraic Kummer surface is the K3-cover of an Enriques surface, Nagoya Math. J. 118, 1990, 99--110.
     \bibitem{Mo} Morrison,D.R. On K3 surfaces with large Picard number, Invent. Math 75, 1984, 105--121.
     \bibitem{MN} Mukai,S. and Namikawa,Y. Automorpphisms of Enriques surfaces which act trivially on the cohomology groups, Invent. Math
     77, 1984, 383--397.
     \bibitem{Na} Namikawa,Y. Periods of Enriques surfaces, Math.Ann 270, 1985, 201--222.
     \bibitem{Ni} Nikulin,V.V. Integral symmetric bilinear forms and some of their applications, Izv. Akad. Nauk SSSR 43, 1979, 111--177.
     \bibitem{N1} Nyhaard,N.O. A p-adic proof of the nonexistence of vectore fields on K3 surface, Ann. of Math. 110, 1979, 515--528.
     \bibitem{N2} Nygaard,N.O. The Tate conjecture for ordinary K3 surfaces over finite fields, Invent. Math 74, 1983, 213--237.
     \bibitem{NO} Nygaard,N.O. and Ogus,A. Tate conjecture for K3 surfaces of finite height, Ann. of Math.(2) 122, 1985, 461--507.
     \bibitem{Og1} Ogus,A. Supersingular K3 crystal, Ast\'{e}risque 64, 1979, 3--86
     \bibitem{Og2} Ogus,A. A crystalline Torelli theorem for supersingular K3 surfaces, Prog. Math. 36, 1983, 361-394.
     \bibitem{PS} Piatetski-Shapiro,I and Shafarevich,I.R. A Torelli theorem for algebraic surfaces of type K3, Math. USSR Izv. 5, 1971, 547--588.
     \bibitem{RS} Rudakov,A.N. and Shafarevich,I.R. Inseparable morphism of algebraic surfaces, Izv.Akad.Nauk SSSR Ser.Mat. 40, 1976, 1269--1307.
     \bibitem{RS2} Rudakov,A.N. and Shafarevich,I.R. Surfaces of type K3 over fields of finite characteristic, Current problems in mathematics, Akad. Nauk SSSR, Vsesoyuz. Inst. Nauchn. i Tekhn. Informatsii vol. 18, 1981, 115--207.     
     \bibitem{Sh1} Shimada,I. Supersingular K3 surfaces in odd characteristic and sextic double planes, Math.Ann. 328, 2004, 451--468.
     \bibitem{Sh2} Shimada,I. On Normal K3 surfaces, Michigan Math. J. 55, 2007, 395--416
     \bibitem{Sh3} Shimada,I. Transcendental Lattices and supersingular reduction lattices of a singular K3 surface, Trans. of A.M.S. 361, 2009, 909--949.

     \end{thebibliography}
     \end{document}